\def\N{{\mathbb{N}}}
\def\R{{\mathbb{R}}}
\def\T{{\mathbb{T}}}
\def\LL{{\mathcal{L}}}
\theoremstyle{plain}
\newtheorem{theorem}{Theorem}
\newtheorem{proposition}{Proposition}
\newtheorem{definition}{Definition}
\newtheorem{lemma}{Lemma} 
\newtheorem{corollary}{Corollary}
\theoremstyle{remark}
\newtheorem{remark}{Remark}
\newtheorem{example}{Examples}
\title[Metrization of probabilistic metric spaces ]{Metrization of probabilistic metric spaces. Applications to fixed point theory and Arzela-Ascoli type theorem}
\author{Mohammed Bachir, Bruno Nazaret}
\begin{document}

\date{05/06/2019} 
\subjclass{54E70,  46S50.}
%46S50, 47S50
\address{Laboratoire SAMM 4543, Universit\'e Paris 1 Panth\'eon-Sorbonne\\
Centre P.M.F. 90 rue Tolbiac\\
75634 Paris cedex 13\\
France}

\email{Mohammed.Bachir@univ-paris1.fr}
\email{Bruno.Nazaret@univ-paris1.fr}
\begin{abstract}
Schweizer, Sklar and Thorp proved in 1960 that a Menger space $(G,D,T)$ under a continuous $t$-norm $T$, induce a natural topology $\tau$ wich is metrizable. We extend this result  to any probabilistic metric space $(G,D,\star)$  provided that the triangle function $\star$ is continuous. We prove in this case, that  the topological space $(G,\tau)$  is uniformly homeomorphic to a (deterministic) metric space $(G,\sigma_D)$ for some canonical metric $\sigma_D$ on $G$. As applications, we extend the fixed point theorem of Hicks to probabilistic metric spaces which are not necessarily Menger spaces and we prove a probabilistic Arzela-Ascoli type theorem. 
\end{abstract}
\maketitle
{\bf Keywords:} Metrization of probabilistic metric space; Probabilistic $1$-Lipschitz map; Probabilistic Arzela-Ascoli type Theorem; Probabilistic fixed point theorem.
\vskip5mm
{\bf msc:} 54E70,  46S50.
%\tableofcontents

\section{\bf Introduction}
%The general concept of probabilistic metric spaces was introduced by K. Menger, who dealt with probabilistic geometry \cite{M1}, \cite{M2}, \cite{M3}. The decisive influence on the development of the theory of probabilistic metric spaces is due to B. Schweizer and A. Sklar and their coworkers in several papers \cite{SS0}, \cite{SS1}, \cite{SS2}, \cite{SS3}, see also \cite{SHE}, \cite{SHE1} \cite{KMP2}, \cite{HP} and \cite{HP1}. For more informations about this theory we refeer to the excellent monograph \cite{S.S}. 

%\vskip5mm

%Recently, the first author introduced in \cite{Ba} a natural concept of probabilistic Lipschitz maps defined from a probabilistic metric space $G$ into the set of all cumulative distribution functions that vanish at $0$, classically denoted by $\Delta^+$. In particular, the introduction of the space of all probabilistic $1$-Lipschitz maps provides a new method for the completion of probabilistic metric spaces extending a result of H. Sherwood in \cite{SHE1}. It also leads to a probabilistic version of the Banach-Stone theorem (see for instance \cite{Ba} and \cite{Ba2}).

\vskip5mm

Let $(G,D,T)$ be a Menger space equipped with a probabilistic metric $D$ and a $t$-norm $T$  (the definitions and notation reminders will be given in the details in Section \ref{S1}). Schweizer and Sklsar \cite{SS1} defined for $\varepsilon, \lambda> 0$  and each $x \in G$ a neighborhood $N_x(\varepsilon, \lambda)$ as follows
\[
N_x(\varepsilon, \lambda)=\lbrace y\in G: D(x,y)(\varepsilon)>1-\lambda \rbrace.
\]
Schweizer, Sklar and Thorp proved in \cite{SS3} that, given a $t$-norm $T$ of a Menger space $(G,D,T)$ satisfying $1 = \sup_{x<1} T(x, x)$ (in particular if $T$ is continuous), the collection $\lbrace N_x(\varepsilon, \lambda): x\in G \rbrace$ taken as a neighborhood base at $x$ gives rise to a metrizable topology.  In \cite{MN} Morrel and Nagata proved the following two extensions:
\begin{enumerate}
	\item The class of topological Menger spaces coincides with that of semi-metrizable topological spaces.
	\item No condition on $T$ weaker than $1 = \sup_{x<1} T(x,x)$ can guarantee that a Menger space, under $T$, is topological.
\end{enumerate}

The aim of the present paper is to prove that, in a general probabilistic metric space $(G,D,\star)$, not necessarily being a Menger space, the collection $\lbrace N_x(\varepsilon, \lambda): x\in X \rbrace$ taken as a neighborhood base at $x$ gives rise to a  topology which is uniformly homeomorphic to a metric space, provided that the triangle function $\star$ is continuous (necessarily uniformly continuous by Sibley's result in \cite{DS} on the compactness of $(\Delta^+,d_\LL)$, where $d_\LL$ denotes the modified L\'evy distance and $\Delta^+$ denotes the set of all nondecreasing and left-continuous distributions that vanish at $0$).

We get an even more precise result : if  $w_\star : [0,+\infty]\to [0,+\infty]$ is a modulus of uniform continuity for the triangle function $\star$, then the (deterministic) metric $\sigma_D$ on $G$ defined canonically from the probabilistic metric $D$ by 
\[
\forall x, y\in G, \hspace{2mm} \sigma_D(x,y):=\sup_{z\in G} d_\LL(D(x,z), D(z,y)),
\]
satisfies the following inequalities
\begin{equation}\label{estimate_cont}
\forall x, y\in G, \ d_\LL(D(x,y),\mathcal{H}_0)\leq \sigma_D(x,y)\leq w_\star(d_\LL(D(x,y),\mathcal{H}_0)).
\end{equation}
Note from \cite{S.S} that $ y\in N_x(t,t)$ if and only if $d_\LL(D(x,y), \mathcal{H}_0) < t$, for all $t>0$.
If moreover we assume that $\star$ is $k$-Lipschitz given some positive real number $k$ (necessarily $k\geq 1$), then we can take $w_\star(t)=kt$ for all $t\geq 0$ (see Proposition \ref{Tri} for examples of such functions).
% and so we have $$(A2) \hspace{2mm}\forall x, y\in G, \hspace{2mm} d_\LL(D(x,y),\mathcal{H}_0)\leq \sigma_D(x,y)\leq k d_L(D(x,y),\mathcal{H}_0).$$
As an immediate consequence of \eqref{estimate_cont}, the semi-metric $\alpha(x,y):=d_\LL(D(x,y),\mathcal{H}_0)$ define a topology on $(G,D,\star)$ which is uniformly (resp. Lipschitz) homeomorphic to  the metric space $(G,\sigma_D)$, whenever $\star$ is continuous (resp. Lipschitz continuous). This result is an extension to non necessarily Menger spaces of the works established in Menger spaces by Schweizer, Sklar and Thorp in \cite{SS3}. In particular, the formula \eqref{estimate_cont} allows us to transfer several known results from  metric space theory to the probabilistic metric theory. For instance, using \eqref{estimate_cont} and the Ekeland variational principle we give some extensions of the fixed point theorem of Hicks (see \cite{HP}), or using again \eqref{estimate_cont} we give an Arzela-Ascoli type theorem for the the space of probabilistic $1$-Lipschtz maps introduced recently in \cite{Ba}. Notice that other results such as Baire theorem and all its variants/consequences can be transfered, thanks to our result, to the probabilistic metric framework.

This paper is organized as follows. In Section \ref{S1}, we recall some classical notions related to probabilistic metric space. In Section \ref{S2}, we treat the metrization of probabilistic metric space and prove Theorem \ref{Metrizable}. We also give some new properties. In Section \ref{S3}, we establish fixed point theorems (Theorem \ref{fix1} and Theorem \ref{fix2}) extending a result of Hicks (see \cite{HP}). In Section \ref{S4}, we prove Theorem \ref{AA}, showing that the set of  probabilistic $1$-Lipschitz maps  introduced in \cite{Ba} is a compact space for the uniform convergence, giving a probabilistic Arzela-Ascoli theorem.

\section{Definitions and notation} \label{S1}
In this section, we recall some known facts about probabilistic metric spaces, the modified L\'evy distance and the weak convergence. All these notions can be found in \cite{S.S}, \cite{KMP2} and \cite{KMP3}. We also recall the notion of probabilistic $1$-Lipschitz map introduced in \cite{Ba}, which shall play an important role in the sequel.

\subsection{Probabilistic metric space and triangle function}
By $\Delta^+$ we denote the set of all (cumulative) distribution functions $F : [-\infty, +\infty] \longrightarrow [0, 1]$, nondecreasing and left-continuous with $F(-\infty) = 0$; $F(+\infty) = 1$ and $F(0) = 0$. For $a\in [0,+\infty[$, we denote $\mathcal{H}_a(t)=0$ if $t\leq a$ and  $\mathcal{H}_a(t)=1$, if $t>a$.

In the sequel, we shall write $F\leq G$ for
\[
\forall t\in\R, \quad F(t)\leq G(t),
\]
which defines an ordering relation on $\Delta^+$.

\begin{definition} \label{axiom1} (\cite{S.S,HP, KMP2, KMP3})  A binary operation $\star$ on $\Delta^+$ is called a triangle function if and only if it is  commutative, associative, non-decreasing in each place, and has $\mathcal{H}_0$ as neutral element. In other words:
\begin{itemize}
\item[(i)] $F\star L \in \Delta^+$ for all $F, L \in \Delta^+$.
\item[(ii)] $F\star L=L\star F$ for all $F, L \in \Delta^+$.
\item[(iii)] $F\star(L\star K)=(F\star L)\star K$, for all $F,L,K\in \Delta^+$.
\item[(iv)] $F\star\mathcal{H}_0=F$ for all $F\in \Delta^+$.
\item[(v)] $F\leq L \Longrightarrow F\star K\leq L\star K$ for all $F, L, K\in \Delta^+$.
\end{itemize}
\end{definition}
\begin{definition} A $t$-norm is a function $T:[0,1]\times[0,1]\to[0,1]$, usually called a triangular norm (see  \cite{S.S,HP, KMP2, KMP3}),  satisfying
\begin{itemize}
\item $T(x,y) = T(y,x)$ ( commutativity);
\item $T(x,T(y,z)) = T(T(x,y),z)$ (associativity);
\item $T(x,y) \leq T(x,z)$ whenever $y\leq z$ (monotonicity );
\item $T(x, 1) = x$ (boundary condition).	
\end{itemize}
\end{definition}
\begin{definition} A probabilistic metric space $(G,D,\star)$ (an PM-space) is a set $G$ together with a triangle function $\star$ and a function $D: G\times G\to \Delta^+$ satisfying:
\begin{itemize}
\item[(i)] $D(x,y)=\mathcal{H}_0$ iff $x=y$.
\item[(ii)] $D(x,y)=D(y,x)$ for all $x,y\in G$ %(Symmetry).
\item[(iii)] $D(x,y)\star D(y,z)\leq D(x,z)$ for all $x, y, z\in G$ %(Triangulzr inequality).
\end{itemize}
%$A1.$ $F_{xy}(0)=0$,

%$A2.$ $F_{xy}=F_{yx}$ for all $x, y \in X$,

%$A3.$  $F_{xy}(t)=0$ for all $t>0$, iff $x=y$,

%$A4.$ if $F_{xy}(r)=1$ and $F_{yz}(s)=1$, then $ F_{xz}(r+s)=1$.
\end{definition}
Usually, $D(x,y)$ is denoted by $F_{x,y}$ in the literature.  A probabilistic metric space $(G,D,\star)$ is called a Menger space and denoted by $(G,D,T)$, iff the triangle function $\star:=\star_T$ is defined from a $t$-norm $T$ as follows: for all $F, L\in \Delta^+$ and for all $t\in \R$,
\begin{eqnarray} \label{eq0}
(F\star_T L)(t) &:=& \sup_{u+v=t} T(F(u),L(v))\\
                        &=&  \sup_{u,v\leq 0: u+v=t} T(F(u), L(v))\nonumber
\end{eqnarray}

\subsection{ L\'evy distance and weak convergence}
\begin{definition}
Let $F$ and $G$ be in $\Delta^+$. For any $h>0$ we set
\[
A_{F,G}^h=\left\{t\geq 0 \text{ st. } G(t)\leq F(t+h)+h\right\}.
\]
The modified L\'evy distance is the map $d_\LL$ defined on $\Delta^+ \times \Delta^+$ as
$$d_\LL(F, G) = \inf \left\lbrace h>0 \text{ st. } [0,h^{-1}[\subset A_{F,G}^h\cap A_{G,F}^h \right\rbrace.$$
\end{definition}
Notice that, for all $F$, $G\in\Delta^+$,
\begin{itemize}
	\item[(i)] if $F\leq G$ then $A_{G,F}=[0,+\infty[$, hence
	\[
	d_\LL(F, G) = \inf \left\lbrace h>0 \text{ st. } [0,h^{-1}[\subset A_{F,G}^h\right\rbrace.
	\]
	\item[(ii)] if $h\geq 1$, $A_{F,G}^h=A_{G,F}^h=[0,+\infty[$, hence $d_\LL(F, G)\leq 1$.
	\item[(iii)] The usual Levy distance between general cumulative distribution functions can be expressed as
	\[
	\inf \left\lbrace h>0 \text{ st. } A_{F,G}^h = A_{G,F}^h = [0,+\infty[ \right\rbrace.
	\]
	It is invariant under the action of translations which, as we shall see later, is not the case for the modified version since it somehow does not see the behaviour at infinity.
\end{itemize}

\begin{definition} \label{contin} Let $\star$ be a triangle function on $\Delta^+$.

$(1)$ A sequence $(F_n)$ of distributions in $\Delta^+$ converges weakly to a function $F$ in $\Delta^+$ if $(F_n(t))$ converges to $F(t)$ at each point $t$ of continuity of $F$. In this case, we write indifferently $F_n \,{\xrightarrow {\textnormal{w}}}\, F$ or $\lim_n F_n =F$.

$(2)$ We say that the law $\star$ is continuous at $(F,L)\in \Delta^+\times \Delta^+$ if we have $F_n\star L_n \,{\xrightarrow {\textnormal{w}}}\, F\star L$, whenever $F_n \,{\xrightarrow {\textnormal{w}}}\, F$ and $L_n \,{\xrightarrow {\textnormal{w}}}\, L$. 
\end{definition}
%Recall from \cite[Lemme 4.3.4]{S.S} and \cite{SS4} that the map $F\mapsto d_\LL(F,\mathcal{H}_0)$ is non-increassing, that is
%\begin{eqnarray*}
%F,G\in \Delta^+, F\leq G\Longrightarrow d_\LL(G,\mathcal{H}_0) \leq d_\LL(F,\mathcal{H}_0).
%\end{eqnarray*}
We recall the following results due to D. Sibley in \cite[Theorem 1. and Theorem 2]{DS}.
\begin{lemma} \label{DS1} (\cite{DS,S.S}) The function $d_\LL$ is a metric on $\Delta^+$ and $(\Delta^+,d_\LL)$ is compact.
\end{lemma}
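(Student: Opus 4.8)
The plan is to verify the metric axioms directly from the definition and then to derive compactness from Helly's selection principle, the whole difficulty being the mass that may escape to infinity.

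\textbf{That $d_\LL$ is a metric.} Nonnegativity is built into the definition, symmetry is immediate since the condition $[0,h^{-1}[\,\subset A_{F,G}^h\cap A_{G,F}^h$ is symmetric in $F,G$, and $d_\LL(F,F)=0$ because $F(t)\leq F(t+h)+h$ holds for every $h>0$. For the separation axiom, suppose $d_\LL(F,G)=0$. Then for every $h>0$ and every $s\in[0,h^{-1}[$ one has $G(s)\leq F(s+h)+h$; fixing $t>s$ and taking $h\leq t-s$ gives $F(s+h)\leq F(t)$, hence $G(s)\leq F(t)+h$, and letting $h\to 0^+$ then $s\uparrow t$ and invoking the left-continuity of $G$ yields $G(t)\leq F(t)$. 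By symmetry $F=G$. The triangle inequality is the delicate axiom. First I would record that $A_{F,G}^h$ is nondecreasing in $h$ (raising $h$ only relaxes the inequality $G(t)\leq F(t+h)+h$), so $S_{F,G}:=\{h>0:[0,h^{-1}[\,\subset A_{F,G}^h\cap A_{G,F}^h\}$ is an up-set and the defining containment holds for \emph{every} $h>d_\LL(F,G)$. Writing $a=d_\LL(F,G)$ and $b=d_\LL(G,H)$, the case $a+b\geq 1$ is settled by $d_\LL\leq 1$; if $a+b<1$, fix $h\in{]a+b,1]}$ and split $h=h_1+h_2$ with $h_1>a$, $h_2>b$. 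Since $h_1<h\leq 1$ forces $h_1h<1$, an elementary computation gives $t+h_2<h_1^{-1}$ whenever $t<h^{-1}$, so for such $t$ one may chain
$$H(t)\leq G(t+h_2)+h_2\leq F(t+h_1+h_2)+h_1+h_2=F(t+h)+h,$$
and symmetrically for $A_{H,F}^h$; thus $h\in S_{F,H}$ and, letting $h\downarrow a+b$, $d_\LL(F,H)\leq a+b$.

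\textbf{Compactness.} As $(\Delta^+,d_\LL)$ is metric, it suffices to prove sequential compactness. Given $(F_n)\subset\Delta^+$, I would apply Helly's selection theorem to these uniformly bounded nondecreasing functions to extract a subsequence $(F_{n_k})$ converging at every continuity point to some nondecreasing $G:\R\to[0,1]$ with $G\equiv 0$ on ${]-\infty,0]}$. Let $F$ be the left-continuous regularization of $G$, completed by $F(+\infty):=1$; then $F\in\Delta^+$ regardless of whether $\lim_{t\to+\infty}G(t)=1$, the possibly missing mass being assigned to the point $+\infty$. It then remains to show $d_\LL(F_{n_k},F)\to 0$, for which I would establish the general fact that $d_\LL$ metrizes weak convergence: fixing $h>0$, weak convergence on the bounded window $[0,h^{-1}+h]$ (sandwiching through finitely many continuity points of $F$ and using monotonicity) gives, for $k$ large, both $F(t)\leq F_{n_k}(t+h)+h$ and $F_{n_k}(t)\leq F(t+h)+h$ for all $t\in[0,h^{-1}[$, whence $d_\LL(F_{n_k},F)\leq h$, and letting $h\to 0$ concludes.

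\textbf{Main obstacle.} The hard point is not the axioms, where the only subtlety is the correct use of left-continuity, but the interplay between compactness and the escape of mass to infinity. The crucial structural feature is that for each fixed $h$ the distance compares the two distributions only on the bounded set $[0,h^{-1}[$; consequently $d_\LL$ is blind to the behaviour at $+\infty$, which is exactly what allows a weak limit $G$—even a defective one with $\lim_{t\to+\infty}G(t)<1$—to be repaired into a genuine element of $\Delta^+$ by placing the defect at $+\infty$, without destroying convergence. Making this locality of $d_\LL$ precise and checking its compatibility with the boundary conditions defining $\Delta^+$ is where the real work lies; an equally viable alternative would be to prove total boundedness (via finite nets of step functions valued on an $\e$-grid of $[0,\e^{-1}]$) together with completeness, again leaning on the same locality property.
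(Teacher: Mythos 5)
Your proof is correct. Note, however, that the paper itself offers no proof of this lemma at all: it is imported verbatim from Sibley \cite{DS} (Theorems 1 and 2) and from the Schweizer--Sklar book \cite{S.S}, so the comparison here is really with the classical argument in those references, which your proposal essentially reconstructs: direct verification of the axioms, then Helly's selection principle combined with the locality of $d_\LL$. You correctly isolate the two points on which everything hinges. First, the up-set property of $\{h>0:\ [0,h^{-1}[\,\subset A_{F,G}^h\cap A_{G,F}^h\}$, which legitimizes both the separation argument and the splitting $h=h_1+h_2$ in the triangle inequality (your elementary check that $t+h_2<h_1^{-1}$ when $t<h^{-1}$ and $h_1h<1$ is exactly the computation that makes the chaining work). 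Second, and more importantly, the fact that $\Delta^+$ as defined here requires $F(+\infty)=1$ only at the symbol $+\infty$ and \emph{not} $\lim_{t\to+\infty}F(t)=1$; since $d_\LL$ only compares distributions on the bounded window $[0,h^{-1}[$, a Helly limit that loses mass can be repaired into a genuine element of $\Delta^+$ by placing the defect at $+\infty$, and this is precisely why compactness holds (it would fail for proper distribution functions under the classical L\'evy metric). Two cosmetic remarks: you invoke the up-set property in the separation axiom before recording it (harmless, since arbitrarily small admissible $h$ suffice there), and for the final step you only need the easy direction of ``$d_\LL$ metrizes weak convergence'' (weak convergence at continuity points implies $d_\LL$-convergence), which is the direction your grid-sandwich argument proves; the converse, stated as Lemma~\ref{DS2} in the paper, is not needed.
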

\begin{lemma} \label{DS2} (\cite{DS,S.S}) Let $(F_n)$ be a sequence of functions in $\Delta^+$, and let $F$ be
an element of $\Delta^+$. Then $(F_n)$ converges weakly to $F$ if and only if $d_\LL(F_n, F)\longrightarrow 0$, when $n \longrightarrow +\infty$.
\end{lemma}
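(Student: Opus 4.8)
The plan is to prove the two implications separately, following the classical argument that the Lévy metric metrizes weak convergence, while keeping track of the two features specific to the modified distance $d_\LL$: the truncation of the defining condition to the interval $[0,h^{-1}[$, and the fact that every element of $\Delta^+$ vanishes on $]-\infty,0]$ and tends to $1$ at $+\infty$. The latter means that all distributions are proper, so no tightness issue arises, and that only continuity points $t>0$ need to be examined (convergence at $t\leq 0$ being automatic since every $F\in\Delta^+$ is $0$ there).

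First, assume $d_\LL(F_n,F)\to 0$ and fix a continuity point $t>0$ of $F$. For $\epsilon>0$ small and $n$ large one has $d_\LL(F_n,F)<\epsilon$, together with $t<\epsilon^{-1}$ and $t-\epsilon\geq 0$, so that both $t$ and $t-\epsilon$ lie in $[0,\epsilon^{-1}[$ and hence in $A_{F_n,F}^\epsilon\cap A_{F,F_n}^\epsilon$. Unwinding these memberships yields $F(t-\epsilon)-\epsilon\leq F_n(t)\leq F(t+\epsilon)+\epsilon$. Passing to $\liminf$ and $\limsup$ in $n$ and then letting $\epsilon\to 0$, the continuity of $F$ at $t$ squeezes $F_n(t)\to F(t)$, which is exactly weak convergence.

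Conversely, assume $F_n\,{\xrightarrow{\textnormal{w}}}\,F$ and fix $\epsilon\in\,]0,1[$. Using that the monotone function $F$ has at most countably many discontinuities, I would select a finite grid $0=x_0<x_1<\dots<x_m$ with $x_m\geq\epsilon^{-1}$, mesh $x_{i+1}-x_i<\epsilon$, and each $x_i$ (for $i\geq 1$) a continuity point of $F$. Weak convergence then furnishes an index $N$ with $|F_n(x_i)-F(x_i)|<\epsilon$ for all $i$ and all $n\geq N$. For $t\in[0,\epsilon^{-1}[$, choosing $i$ with $x_i\leq t<x_{i+1}$ gives $x_{i+1}<x_i+\epsilon\leq t+\epsilon$; combining monotonicity of $F_n$ and $F$ with the estimate at the continuity point $x_{i+1}$ produces both $F(t)\leq F(x_{i+1})\leq F_n(x_{i+1})+\epsilon\leq F_n(t+\epsilon)+\epsilon$ and, symmetrically, $F_n(t)\leq F(t+\epsilon)+\epsilon$. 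Hence $[0,\epsilon^{-1}[\subset A_{F_n,F}^\epsilon\cap A_{F,F_n}^\epsilon$, so $d_\LL(F_n,F)\leq\epsilon$ for $n\geq N$; since $\epsilon$ is arbitrary, $d_\LL(F_n,F)\to 0$.

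I expect the only delicate point to be the bookkeeping around the truncation at $\epsilon^{-1}$. In the first implication one must check that a fixed continuity point $t$ eventually satisfies $t-\epsilon\geq 0$ and $t<\epsilon^{-1}$, so that the defining inequalities of $d_\LL$ may legitimately be invoked at both $t$ and $t-\epsilon$; in the second, one must ensure the grid reaches beyond $\epsilon^{-1}$ while the shifted arguments $t+\epsilon$ stay within the range controlled by monotonicity. Because every $F\in\Delta^+$ is proper, the truncation entails no loss and these verifications are routine.
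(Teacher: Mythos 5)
Your argument is correct, but note that the paper contains no proof of Lemma~\ref{DS2} to compare against: it is quoted directly from Sibley \cite{DS} (see also \cite{S.S}). What you have written is essentially the standard argument by which the L\'evy metric metrizes weak convergence, adapted to the truncated membership condition $[0,h^{-1}[\,\subset A_{F_n,F}^{h}\cap A_{F,F_n}^{h}$. Both implications are sound: the squeeze $F(t-\e)-\e\leq F_n(t)\leq F(t+\e)+\e$ at continuity points $t>0$ (after the routine upgrade, via monotonicity of the distributions, from ``some $h<\e$ realizes the inclusion'' to ``$h=\e$ does''), and, conversely, the finite grid of continuity points with mesh $<\e$ reaching past $\e^{-1}$, which yields $d_\LL(F_n,F)\leq\e$ for $n$ large.

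One conceptual correction, which happily does not affect the validity of the proof: your preliminary claim that every element of $\Delta^+$ ``tends to $1$ at $+\infty$'', so that all distributions are proper, is false. The normalization $F(+\infty)=1$ in the definition of $\Delta^+$ concerns only the value at the symbol $+\infty$; the function equal to $0$ on $]-\infty,0]$, to $\frac12$ on $]0,+\infty[$, and to $1$ at $+\infty$ belongs to $\Delta^+$ yet has $\lim_{t\to+\infty}F(t)=\frac12$. It is precisely because such improper distributions are admitted that Sibley truncates the defining condition to $[0,h^{-1}[$: the modified metric is deliberately blind to mass escaping to infinity (the paper itself remarks that $d_\LL$ ``does not see the behaviour at infinity''), and this is what makes $(\Delta^+,d_\LL)$ compact (Lemma~\ref{DS1}), whereas the classical L\'evy metric would not do the job on all of $\Delta^+$. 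Your actual estimates never invoke properness --- the truncation at $\e^{-1}$ does all the work, exactly as it should --- so the slip is confined to the motivation and the proof stands.
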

\begin{remark} Thanks to Lemma \ref{DS2}, we shall indifferently use the notations $F_n\,{\xrightarrow {\textnormal{w}}}\, F$ or $d_\LL(F_n, F)\longrightarrow 0$ to say that $(F_n)$ converges weakly to $F$.
\end{remark}
\begin{definition} \label{neigh}
Let $(G,D,\star)$ be a probabilistic metric space. For $x \in G$ and $t > 0$, the strong $t$-neighborhood
of $x$ is the set
$$N_x(t) = \lbrace y \in G : D(x,y)(t) > 1 - t \rbrace,$$
and the strong neighborhood system for $G$ is $\lbrace N_x(t); x\in G, t > 0 \rbrace.$
\end{definition}
\begin{lemma} \label{Topo} (\cite[Lemme 4.3.3]{S.S}) Let $t > 0$ and $x, y \in G$. Then we have
$ y\in N_x(t)$ if and only if $d_\LL(D(x,y), \mathcal{H}_0) < t$.
\end{lemma}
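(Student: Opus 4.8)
The plan is to fix $x,y\in G$, write $F:=D(x,y)\in\Delta^+$, and reduce the defining infimum for $d_\LL(F,\mathcal{H}_0)$ to a single scalar condition on $F$. First I would observe that $\mathcal{H}_0$ is the largest element of $(\Delta^+,\leq)$: every $F\in\Delta^+$ satisfies $F(t)=0=\mathcal{H}_0(t)$ for $t\leq 0$ and $F(t)\leq 1=\mathcal{H}_0(t)$ for $t>0$, so $F\leq\mathcal{H}_0$. Hence, by observation (i) following the definition of the modified L\'evy distance (applied with $F\leq\mathcal{H}_0$), we have $A_{\mathcal{H}_0,F}^h=[0,+\infty[$ for every $h>0$ and therefore $d_\LL(F,\mathcal{H}_0)=\inf\{h>0:\ [0,h^{-1}[\,\subset A_{F,\mathcal{H}_0}^h\}$. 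Next I would unwind this inclusion: the point $s=0$ is automatic since $\mathcal{H}_0(0)=0$, while for $s\in\,]0,h^{-1}[$ the condition $\mathcal{H}_0(s)\leq F(s+h)+h$ reads $F(s+h)\geq 1-h$ (because $\mathcal{H}_0(s)=1$). Since $F$ is nondecreasing, the worst case is $s\to 0^+$, so this whole family of inequalities is equivalent to the single condition $F(h^+)\geq 1-h$, where $F(h^+):=\lim_{u\to h^+}F(u)$. This produces the clean formula
\[
d_\LL(F,\mathcal{H}_0)=\inf\{h>0:\ F(h^+)+h\geq 1\},
\]
which will drive both implications.

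From this formula the equivalence follows. For $d_\LL(F,\mathcal{H}_0)<t\Rightarrow y\in N_x(t)$, the strictness of the infimum yields some $h<t$ with $F(h^+)\geq 1-h$; as $t>h$ gives $F(t)\geq F(h^+)$, I obtain $F(t)\geq 1-h>1-t$, hence $F(t)>1-t$, i.e.\ $y\in N_x(t)$. For the converse, assuming $F(t)>1-t$, I would exhibit some $h<t$ with $F(h^+)+h\geq 1$. The mechanism is the squeeze $F(h)\leq F(h^+)\leq F(t)$ valid for $h<t$, combined with the left-continuity $F(h)\to F(t^-)=F(t)$ as $h\to t^-$, which forces $F(h^+)\to F(t)$ and hence $F(h^+)+h\to F(t)+t>1$. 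Choosing $h<t$ close enough to $t$ gives $F(h^+)+h\geq 1$, so $d_\LL(F,\mathcal{H}_0)\leq h<t$.

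The hard part will be obtaining the \emph{strict} inequality $d_\LL(F,\mathcal{H}_0)<t$ rather than merely $\leq t$ in the converse direction. The naive estimate only gives $F(t^+)+t>1$, i.e.\ $h=t$ belongs to the set, yielding $d_\LL(F,\mathcal{H}_0)\leq t$; upgrading this to $<t$ is precisely where the left-continuity of elements of $\Delta^+$ is used, through the squeeze argument above. A secondary technical point is the reduction of the ``for all $s\in\,]0,h^{-1}[$'' requirement to the single right-limit condition $F(h^+)\geq 1-h$: one must check both that $F(s+h)\geq F(h^+)$ for every $s>0$ and that, when $F(h^+)<1-h$, a violating $s$ can be chosen arbitrarily close to $0$ (hence inside $]0,h^{-1}[$), so the constraint $h^{-1}$ never interferes. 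The regime $t\geq 1$ needs no separate treatment, since the displayed formula handles $1-h\leq 0$ uniformly.
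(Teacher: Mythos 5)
Your proof is correct. Note that the paper itself contains no proof of this lemma: it is quoted directly from Schweizer--Sklar \cite[Lemme 4.3.3]{S.S}, so there is no internal argument to compare against; your derivation supplies a self-contained proof from the paper's definition of $d_\LL$. The route you take is the natural one and it is sound: since $\mathcal{H}_0$ is the maximum of $(\Delta^+,\leq)$, observation (i) after the definition of $d_\LL$ reduces everything to the single inclusion $[0,h^{-1}[\,\subset A^h_{F,\mathcal{H}_0}$, which you correctly convert into the closed formula $d_\LL(F,\mathcal{H}_0)=\inf\left\{h>0 :\ F(h^+)+h\geq 1\right\}$, where $F=D(x,y)$. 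Both implications then follow as you describe, and you correctly isolated the two genuinely delicate points: first, the reduction of the family of inequalities over $s\in\,]0,h^{-1}[$ to the single right-limit condition $F(h^+)\geq 1-h$ is legitimate because any violation already occurs for $s$ arbitrarily close to $0$, so the cutoff $h^{-1}$ never interferes; second, the strict inequality $d_\LL(F,\mathcal{H}_0)<t$ in the converse direction is exactly where the left-continuity of $F$ (built into the definition of $\Delta^+$) enters, via the squeeze $F(h)\leq F(h^+)\leq F(t)$ as $h\uparrow t$. The uniform treatment of the regime $t\geq 1$ (where both sides of the equivalence hold automatically, consistently with $d_\LL\leq 1$) is also handled correctly by your formula.
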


%\begin{proposition} \label{Oujda} \textnormal{ (\cite[Theorem 2.2, Theorem 2.3]{MON})}  Let $(K,D,\star)$  be a complete probabilistic metric space. Then, we have:

%$(1)$  $(K,D,\star)$  is compact if and only if every sequence has a convergent subsequence.

%$(2)$ If $(K,D,\star)$ is compact, then  $(K,D,\star)$  is separable. 

%\end{proposition}
%%
%%
\subsection{Probabilistic $1$-Lipschitz map}

\begin{definition}\label{Ldef2}  Let $(G, D, \star)$ be a probabilistic metric space and let $f$ be a function $f : (G, D, \star) \longrightarrow (\Delta^+,d_\LL)$. We say that $f$ is a probabilistic $1$-Lipschitz map if : 
$$\forall x, y \in G,\hspace{1mm} D(x,y)\star f(y)\leq f(x).$$ 
\end{definition}
\vskip5mm
 We can also define probabilistic $k$-Lipschitz maps for any nonegative real number $k\geq 0$ as the maps $f$ satisfying 
 $$\forall x, y \in G,\hspace{1mm} D_k(x,y)\star f(y)\leq f(x),$$
where, for all $x, y \in G$ and all $t\in \R$, $D_k(x,y)(t)=D(x,y)(\frac{t}{k})$ if $k>0$ and $D_0(x,y)(t)=\mathcal{H}_0(t)$ if $k=0$. For sake of simplicity, when we use the notion in Definition \ref{Ldef2}, we shall only treat in this paper the case of probabilistic $1$-Lipschitz maps, but our main result result could be easily extended to this more general setting.
\begin{example} \label{example.2} Let $(G,d)$ be a metric space. Assume that $\star$ is a triangle function on $\Delta^+$ satisfying $\mathcal{H}_a\star \mathcal{H}_b=\mathcal{H}_{a+b}$ for all $a, b\in \R^+$ (for example if $\star=\star_T$ where $T$ is a lef-continuous triangular norm). Let $(G, D, \star)$ be the probabilistic metric space defined with the probabilistic metric 
$$D(p,q)=\mathcal{H}_{d(p,q)}.$$
Let $L: (G,d) \longrightarrow \R^+$ be a real-valued map. Then, $L$ is a non-negative $1$-Lipschitz map if and only if $f : (G,D,\star) \longrightarrow \Delta^+$ defined for all $x\in G$ by
$$f(x):=\mathcal{H}_{L(x)}$$
is a probabilistic $1$-Lipschitz map. This example shows that the framework of probabilistic $1$-Lipschitz maps encompasses the classical determinist case.

\end{example}
By $Lip^1_\star(G,\Delta^+)$ we denote the space of all probabilistic $1$-Lipschitz maps   
$$Lip^1_\star(G,\Delta^+):=\lbrace f : G\longrightarrow \Delta^+/ D(x,y)\star f(y)\leq f(x); \forall x, y \in G\rbrace.$$
For all $x\in G$, by $\delta_x$ we denote the map 
\begin{eqnarray*}
\delta_x : G &\longrightarrow& \Delta^+\\
            y&\mapsto& D(y,x).
\end{eqnarray*}

It follows from the properties of the probabilistic metric $D$ that $\delta_x$ is a probabilistic $1$-Lipschitz for every $x\in G$. We set $\mathcal{G}(G):=\lbrace \delta_x, x\in G \rbrace$ and by $\delta$, we denote the operator  
\begin{eqnarray*}
\delta : G &\longrightarrow& \mathcal{G}(G)\subset Lip^1_\star(G,\Delta^+)\\
            x &\mapsto& \delta_x.
\end{eqnarray*}
\subsection{Modulus of uniform continuity of a triangle function on $\Delta^+$}
Let $\star:\Delta^+\times \Delta^+ \to \Delta^+$ be a continuous triangle function (with respect to the modified L\'evy distance $d_\LL$). Since $(\Delta^+,d_\LL)$ is a compact metric space (see Lemma \ref{DS1}) and $\star$ is continuous, then $\star$ is uniformly continuous from $\Delta^+\times \Delta^+$ into $\Delta^+$. Let $\omega_\star:[0,+\infty]\to [0,+\infty]$ be a modulus of uniform continuity for $\star$ ($\lim_{t\to 0}\omega_\star(t)=\omega_\star(0)=0$), that is 
%for all $t\geq 0$, 
%\begin{eqnarray} \label{eqs1} 
%\omega_\star (t):= \sup\lbrace d_\LL(F\star L, F'\star L'): F,L, F',L'\in \Delta^+; d_\LL(F,F')+d_\LL(L,L')=t\rbrace.
%\end{eqnarray}
%Then, we have that
for all $(F,L), (F',L')\in \Delta^+\times \Delta^+$
$$d_\LL(F\star L, F'\star L')\leq \omega_\star(d_\LL(F,F')+ d_\LL(L,L')).$$
In particular for all $(F,L) \in \Delta^+\times \Delta^+$
\begin{eqnarray} \label{eqs2} 
d_\LL(F\star L, L)\leq \omega_\star(d_\LL(F,\mathcal{H}_0)).
\end{eqnarray}
If moreover the operation $\star$ is $k$-Lipschitz (with respect to $d_\LL$) for some positive number $k$ then $\omega_\star(t)=k t$ for all $t\geq 0$ (necessarily $k\geq 1$, by using \ref{eqs2} with $L=\mathcal{H}_0$) is a modulus of uniform continuity. We give in Proposition \ref{Tri} examples of $k$-Lipschitz triangle function using $k$-Lipschitz $t$-norms. 

\section{ Metrization of  Probabilistic Metric space.}\label{S2}
%%
%%
%%%%%%%%%%%%%%%%%%%%%%%%%
%%%%%%%%%%%%%%%%%%%%
We give below the main result of this section, that is a metrization of probabilistic metric space extending the result of Schweizer, Sklar and Thorp in \cite{SS3}. 
\vskip5mm
Let $(G, D, \star)$ be a probabilistic metric space. We define canonically the metric $\sigma_D$ on $G$ using the probabilistic metric $D$ as follows: for all $x, y \in G$
\begin{equation*} \sigma_D (x,y)  := \sup_{z\in K} d_\LL(D(x,z), D(y,z)):=\sup_{z\in K} d_\LL(\delta_x(z), \delta_y(z)):=d_{\infty}(\delta_x,\delta_y)
\end{equation*}
It is easy to see that $\sigma_D$ is a metric on $G$ and that for all $x, y\in G$
\begin{equation*} \label{In} d_\LL(D(x,y), \mathcal{H}_0) \leq  \sigma_D (x,y).
\end{equation*}
\begin{theorem} \label{Metrizable} Let $(G, D, \star)$ be a probabilistic metric space such that $\star$ is continuous (resp. $k$-lipschitz). Let $\omega_\star$ be a modulus of uniform continuity of $\star$ on $\Delta^+$. Then, the metric $\sigma_D$ satisfies: for all $x, y \in G$
$$d_\LL(D(x,y), \mathcal{H}_0) \leq  \sigma_D (x,y)\leq \omega_\star(d_\LL(D(x,y),\mathcal{H}_0)).$$
$$(\textnormal{resp. } d_\LL(D(x,y), \mathcal{H}_0) \leq  \sigma_D (x,y)\leq kd_\LL(D(x,y),\mathcal{H}_0)).$$
In particular, the identity map $i: (G, \tau)\to (G,\sigma_D)$ is  an uniform homeomorphism, where $\tau$ is the topology induced by the  strong neighborhood system $\lbrace N_x(t); x\in G, t > 0 \rbrace$ (see Definition \ref{neigh}). 
\end{theorem}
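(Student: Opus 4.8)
The plan is to establish the two inequalities separately, since the final statement about the identity map being a uniform homeomorphism follows formally once the sandwich estimate
\[
d_\LL(D(x,y),\mathcal{H}_0)\leq \sigma_D(x,y)\leq \omega_\star(d_\LL(D(x,y),\mathcal{H}_0))
\]
is in hand.

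The left inequality is essentially already recorded in the excerpt: taking $z=y$ in the supremum defining $\sigma_D(x,y)$ gives $\sigma_D(x,y)\geq d_\LL(D(x,y),D(y,y))=d_\LL(D(x,y),\mathcal{H}_0)$, using axiom (i) of a probabilistic metric space, namely $D(y,y)=\mathcal{H}_0$. So the real content is the right inequality. Here I would fix $x,y\in G$ and bound $d_\LL(D(x,z),D(y,z))$ \emph{uniformly} in $z$. The key observation is the triangle inequality (iii) for the probabilistic metric, $D(x,y)\star D(y,z)\leq D(x,z)$, together with its symmetric companion $D(y,x)\star D(x,z)\leq D(y,z)$; and the aim is to compare $D(x,z)$ with $D(y,z)$ in $d_\LL$ by inserting the "error" term $D(x,y)$.

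The main step is to combine the monotonicity of $\star$ (axiom (v)) with the uniform continuity estimate \eqref{eqs2}, which I regard as the crux. Concretely, from $D(x,y)\star D(y,z)\leq D(x,z)$ I would like to say that $D(x,z)$ is close to $D(y,z)$ because $D(x,y)$ is close to the neutral element $\mathcal{H}_0$. The clean way to do this is to apply \eqref{eqs2} with the substitution $F=D(x,y)$ and $L=D(y,z)$, giving $d_\LL\bigl(D(x,y)\star D(y,z),\, D(y,z)\bigr)\leq \omega_\star(d_\LL(D(x,y),\mathcal{H}_0))$. The point I expect to be the main obstacle is passing from the $\star$-product $D(x,y)\star D(y,z)$ to the actual distance $D(x,z)$: one only has the one-sided order relation $D(x,y)\star D(y,z)\leq D(x,z)$, not equality, so I would need to exploit how $d_\LL$ interacts with the order $\leq$ on $\Delta^+$ — using the observation recorded just after the definition of $d_\LL$ that for ordered distributions one of the sets $A^h$ is automatically all of $[0,+\infty[$. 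Running the same argument with the roles of $x$ and $y$ interchanged, and reconciling the two one-sided bounds, should yield $d_\LL(D(x,z),D(y,z))\leq \omega_\star(d_\LL(D(x,y),\mathcal{H}_0))$ for every $z$, and taking the supremum over $z$ gives exactly the right inequality. The Lipschitz case is then immediate by substituting $\omega_\star(t)=kt$.

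Finally, for the topological conclusion I would invoke Lemma \ref{Topo}: the set $\alpha(x,y):=d_\LL(D(x,y),\mathcal{H}_0)$ is precisely the gauge whose sublevel sets are the strong neighborhoods $N_x(t)$, so $\tau$ is the topology of $\alpha$. The sandwich estimate shows $\alpha$ and $\sigma_D$ generate the same uniformity: since $\omega_\star$ is a modulus of uniform continuity with $\omega_\star(t)\to 0$ as $t\to 0$, the identity map and its inverse are both uniformly continuous between $(G,\alpha)$ and $(G,\sigma_D)$, which is exactly the assertion that $i$ is a uniform homeomorphism (Lipschitz in the $k$-Lipschitz case).
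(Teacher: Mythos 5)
Your proposal is correct and follows essentially the same route as the paper: what you do for each fixed $z$ (combine the estimate \eqref{eqs2} with the two triangle inequalities and the $A^h$-analysis of how $d_\LL$ interacts with the order $\leq$, then take $\max(h_1,h_2)$) is exactly the paper's proof of its Lemma \ref{equi}, which the paper states as uniform equicontinuity of all of $Lip^1_\star(G,\Delta^+)$ and then applies to the maps $\delta_z$, whose probabilistic $1$-Lipschitz property is precisely the pair of order relations you exploit. The treatment of the left inequality and the appeal to Lemma \ref{Topo} for the uniform-homeomorphism conclusion also match the paper's argument.
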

This theorem is a mere consequence of the following lemma, that we will also use for proving Theorem~\ref{AA}.
\begin{lemma} \label{equi} Let $(G, D, \star)$ be a probabilistic metric space such that $\star$ is continuous.  Let $\omega_\star$ be a modulus of uniform continuity of $\star$ on $\Delta^+$. Then, the set $Lip_\star^1(G,\Delta^+)$ is uniformly  equicontinuous. More precisely, we have  $\forall x, y\in G:$
$$\sup_{f\in Lip_\star^1(G,\Delta^+)}d_\LL(f(x),f(y)) \leq \omega_\star(d_\LL(D(x,y),\mathcal{H}_0)).$$
\end{lemma}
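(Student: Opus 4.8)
The plan is to reduce the uniform equicontinuity statement to the single pointwise estimate
$$d_\LL(f(x),f(y)) \leq \omega_\star(d_\LL(D(x,y),\mathcal{H}_0)), \qquad f\in Lip^1_\star(G,\Delta^+),\ x,y\in G,$$
from which the claimed supremum bound follows immediately. Throughout I would abbreviate $a:=f(x)$, $b:=f(y)$, $c:=D(x,y)$, and $\omega:=\omega_\star(d_\LL(c,\mathcal{H}_0))$. The first step is to extract the two consequences of $f$ being probabilistic $1$-Lipschitz, using the symmetry $D(x,y)=D(y,x)=c$: applying the defining inequality to the pairs $(x,y)$ and $(y,x)$ gives $c\star b\leq a$ and $c\star a\leq b$. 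I would then record that $\mathcal{H}_0$ is the greatest element of $(\Delta^+,\leq)$ (every $F\in\Delta^+$ satisfies $F(t)\leq\mathcal{H}_0(t)$ for all $t$), so by monotonicity (axiom (v)) and the neutral element property (axiom (iv)) one also has $c\star a\leq a$ and $c\star b\leq b$. Feeding $(F,L)=(c,a)$ and $(F,L)=(c,b)$ into the estimate \eqref{eqs2} yields $d_\LL(c\star a,a)\leq\omega$ and $d_\LL(c\star b,b)\leq\omega$.

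The heart of the argument is to upgrade these two one-sided estimates into the two-sided estimate $d_\LL(a,b)\leq\omega$ by unwinding the definition of the modified L\'evy distance. I would fix an arbitrary $h>\omega$. Since $c\star a\leq a$, the inequality $d_\LL(c\star a,a)\leq\omega<h$ forces $[0,h^{-1}[\,\subset A_{c\star a,a}^h\cap A_{a,c\star a}^h$, in which the condition coming from $A_{a,c\star a}^h$ is automatic (because $c\star a\leq a$) and the condition coming from $A_{c\star a,a}^h$ reads $a(t)\leq(c\star a)(t+h)+h$ for all $t\in[0,h^{-1}[$. Chaining this with the monotone substitution $c\star a\leq b$ inside the argument $t+h$ gives $a(t)\leq b(t+h)+h$ on $[0,h^{-1}[$, i.e.\ $[0,h^{-1}[\,\subset A_{b,a}^h$. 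Symmetrically, $d_\LL(c\star b,b)\leq\omega<h$ together with $c\star b\leq a$ gives $b(t)\leq a(t+h)+h$ on $[0,h^{-1}[$, i.e.\ $[0,h^{-1}[\,\subset A_{a,b}^h$. Hence $[0,h^{-1}[\,\subset A_{a,b}^h\cap A_{b,a}^h$, so $d_\LL(a,b)\leq h$; letting $h\downarrow\omega$ gives $d_\LL(a,b)\leq\omega$, as desired.

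I expect the only delicate point to be the bookkeeping with the definition of $d_\LL$: keeping track of which of the two set-conditions $A_{F,G}^h$, $A_{G,F}^h$ becomes vacuous under an ordering, and performing the substitution $c\star a\leq b$ at the shifted point $t+h$ rather than at $t$. Working with an arbitrary $h>\omega$ and passing to the limit sidesteps any question of whether the infimum defining $d_\LL$ is attained. Finally, I note that this lemma is exactly what drives Theorem~\ref{Metrizable}: applying it to the subfamily $\{\delta_z:z\in G\}\subset Lip^1_\star(G,\Delta^+)$ and using $\sigma_D(x,y)=\sup_{z}d_\LL(\delta_z(x),\delta_z(y))$ yields the upper bound, while evaluating at $z=y$, where $d_\LL(\delta_y(x),\delta_y(y))=d_\LL(D(x,y),\mathcal{H}_0)$, yields the matching lower bound.
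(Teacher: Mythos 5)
Your proposal is correct and follows essentially the same route as the paper's own proof: both apply the modulus estimate \eqref{eqs2} with $L=f(x)$ and $L=f(y)$, then unwind the definition of $d_\LL$ and chain the one-sided conditions with the $1$-Lipschitz relations $D(x,y)\star f(x)\leq f(y)$ and $D(x,y)\star f(y)\leq f(x)$ at the shifted argument $t+h$. The only cosmetic difference is that you run the argument with a single $h>\omega_\star(d_\LL(D(x,y),\mathcal{H}_0))$, whereas the paper keeps two parameters $h_1,h_2$ and concludes with their maximum.
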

\begin{proof} From the formula (\ref{eqs2}) about the modulus of uniform continuity of $\star$, we have that $ \forall L\in \Delta^+, \forall x, y\in G:$
$$ d_\LL(D(x,y)\star L,L) \leq \omega_\star(d_\LL(D(x,y),\mathcal{H}_0)).$$
In particular, we have for all $f\in Lip_\star^1(G,\Delta^+)$ and all $x$, $y\in G$,
\[
\max [d_\LL(D(x,y)\star f(x),f(x)),d_\LL(D(x,y)\star f(y),f(y))]\leq \omega_\star(d_\LL(D(x,y),\mathcal{H}_0)),
\]
hence, it is enough to prove that 
\[
d_\LL(f(x),f(y)) \leq \max [d_\LL(D(x,y)\star f(x),f(x)),d_\LL(D(x,y)\star f(y),f(y))].
\]
Let $h_1, h_2 > 0$ such that, 
\begin{equation}\label{eq:2}
[0,h_1^{-1}[\subset A^{h_1}_{D(x,y)\star f(x),f(x)}\cap A^{h_1}_{f(x),D(x,y)\star f(x)}.
\end{equation}
\begin{equation}\label{eq:21}
[0,h_2^{-1}[\subset A^{h_2}_{D(x,y)\star f(y),f(y)}\cap A^{h_2}_{f(y),D(x,y)\star f(y)}.
\end{equation}
that is, for all $t\in ]0,h_1^{-1}[$ and all $t'\in ]0,h_2^{-1}[$, we have
\begin{eqnarray*}
0\leq D(x,y)\star f(x)(t) &\leq & f(x)(t+h_1) + h_1 \\
0\leq f(x)(t) &\leq & D(x,y)\star f(x)(t+h_1) + h_1 \\
0\leq D(x,y)\star f(y)(t') &\leq & f(y)(t'+h_2) + h_2\\
0\leq f(y)(t') &\leq & D(x,y)\star f(y)(t'+h_2) + h_2. 
\end{eqnarray*}
From the second, the fourth inequalities and the fact that $f$ is $1$-Lipschitz, we get that for all $t\in ]0,h_1^{-1}[$ and all $t'\in ]0,h_2^{-1}[$
\begin{eqnarray*}
0\leq f(x)(t) \leq  f(y)(t+h_1) + h_1 \\
0\leq f(y)(t') \leq  f(x)(t'+h_2) + h_2.
\end{eqnarray*} 
It follows that for all $s\in ]0, \max(h_1,h_2)^{-1}[$ (a subset of $]0,\min(h_1^{-1},h_2^{-1})[$)
\begin{eqnarray*}
0\leq f(x)(s) \leq  f(y)(s+ \max(h_1,h_2)) + \max(h_1,h_2) \\
0\leq f(y)(s) \leq  f(x)(s+ \max(h_1,h_2)) + \max(h_1,h_2).
\end{eqnarray*}
Thus, we have that $d_\LL(f(x),f(y))\leq \max(h_1,h_2)$ for all $h_1, h_2 > 0$ satisfying \eqref{eq:2} and \eqref{eq:21}. This implies that 
\begin{eqnarray*}
d_\LL(f(x),f(y)) \leq \max (d_\LL(D(x,y)\star f(x),f(x)),d_\LL(D(x,y)\star f(y),f(y))),
\end{eqnarray*}
and the conclusion.
\end{proof}
Let us now prove Theorem \ref{Metrizable}.
\begin{proof}[Proof of Theorem \ref{Metrizable}] The inequality at the left is a direct consequence of the definition of $\sigma_D$. To  prove the inequality at the right, we use Lemma \ref{equi} noticing that
\[
\mathcal{G}(G):=\lbrace \delta_x/ x\in G \rbrace\subset  Lip_\star^1(G,\Delta^+).
\]
The second part  of the theorem follows from Lemma \ref{Topo} since, $y\in N_x(t)$ if and only if $d_\LL(D(x,y), \mathcal{H}_0) < t$ for each $t > 0$ and $x, y \in G$.
\end{proof}
%%%%%%%%%%
%%%%%%%%%%
The notion of probabilistic distance naturally leads to associated metric concepts, such as Cauchy sequence, completeness, separability, density and compatness.
\begin{definition} A complete probabilistic metric space $(K,D,\star)$ is called compact if for all $t>0$, the
open cover $\lbrace N_x(t): x\in K \rbrace$ has a finite subcover.
\end{definition}
%This notion of probabilistic distance naturally leads to associated metric concepts, such as Cauchy sequence and completeness.
\begin{definition} In a probabilistic metric space $(G, D, \star)$, a sequence $(z_n)\subset G$ is said to be a Cauchy sequence if for all $t\in \R$,
$$\lim_{n,p \longrightarrow +\infty} D(z_n, z_p)(t)=\mathcal{H}_0(t).$$
(Equivalently, if $D(z_n, z_p)\,{\xrightarrow {\textnormal{w}}}\,\mathcal{H}_0$ or $d_\LL(D(z_n, z_p),\mathcal{H}_0)\to 0$, when $n, p\longrightarrow +\infty$). A probabilistic metric space $(G, D, \star)$ is said to be complete if every Cauchy sequence $(z_n)\subset G$ weakly converges to some $z_{\infty}\in G$, that is $\lim_{n \rightarrow +\infty} D(z_n, z_{\infty})(t)=\mathcal{H}_0(t)$ for all $t\in \R$, we will briefly note $\lim_n D(z_n, z_{\infty})=\mathcal{H}_0$. 
\end{definition}
\begin{corollary} \label{passage} Let $(G, D, \star)$ be a probabilistic metric space such that $\star$ is continuous. Then, the following assertions hold.
	
	$(1)$ $(G, D, \star)$ is a probabilistic complete metric space iff $(G,\sigma_D)$ is a complete metric space.
	
	$(2)$ $(G, D, \star)$ is compact as probabilistic metric space iff $(G,\sigma_D)$ is a compact metric space.
	
	$(3)$ $(G, D, \star)$ is separable as probabilistic metric space iff $(G,\sigma_D)$ is separable metric space.
	
\end{corollary}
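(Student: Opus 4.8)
The plan is to prove Corollary \ref{passage} by translating each probabilistic notion into its metric counterpart through the uniform homeomorphism established in Theorem \ref{Metrizable}. The key observation I would exploit throughout is that the two-sided estimate
\[
d_\LL(D(x,y),\mathcal{H}_0) \leq \sigma_D(x,y) \leq \omega_\star(d_\LL(D(x,y),\mathcal{H}_0))
\]
shows that $d_\LL(D(x,y),\mathcal{H}_0) \to 0$ if and only if $\sigma_D(x,y) \to 0$, because $\omega_\star$ is a modulus of continuity with $\omega_\star(0)=0$ and $\lim_{t\to 0}\omega_\star(t)=0$. This single equivalence is the engine behind all three assertions.

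For assertion $(1)$, I would first note that by the definition of Cauchy sequence together with Lemma \ref{DS2}, a sequence $(z_n)$ is probabilistic Cauchy precisely when $d_\LL(D(z_n,z_p),\mathcal{H}_0)\to 0$ as $n,p\to\infty$, which by the estimate above is equivalent to $\sigma_D(z_n,z_p)\to 0$, i.e. to $(z_n)$ being Cauchy in $(G,\sigma_D)$. Similarly, weak convergence $D(z_n,z_\infty)\,{\xrightarrow {\textnormal{w}}}\,\mathcal{H}_0$ is equivalent to $\sigma_D(z_n,z_\infty)\to 0$. Hence the two completeness notions coincide verbatim: every probabilistic Cauchy sequence converges weakly to some limit in $G$ if and only if every $\sigma_D$-Cauchy sequence converges in $(G,\sigma_D)$.

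For assertion $(2)$, the crucial point is Lemma \ref{Topo}, which identifies the strong neighborhoods as metric balls: $y\in N_x(t)$ if and only if $d_\LL(D(x,y),\mathcal{H}_0)<t$. By the estimate, this ball is squeezed between two $\sigma_D$-balls, so the open covers $\{N_x(t):x\in G\}$ and the $\sigma_D$-open covers are mutually refining for suitable radii; since compactness is a topological invariant and $i:(G,\tau)\to(G,\sigma_D)$ is a homeomorphism, the finite-subcover property transfers in both directions. I would combine this with assertion $(1)$ to match the completeness clause in the definition of compact probabilistic metric space. For assertion $(3)$, separability is likewise a topological property preserved by the homeomorphism $i$, so a countable dense set for one topology is countable dense for the other.

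The main obstacle I anticipate is purely definitional bookkeeping rather than genuine difficulty: one must be careful that the definition of compact probabilistic metric space bundles completeness \emph{together} with the open-cover condition, so assertion $(2)$ is not merely topological compactness but requires invoking assertion $(1)$ as well; and one must confirm that the asymmetry of the strong neighborhoods (the radius $t$ controls both $\varepsilon$ and $\lambda$ simultaneously via $N_x(t)$) is correctly handled by Lemma \ref{Topo}. Once these identifications are made explicit, each implication follows immediately from the equivalence of the convergence and covering notions, so no further estimates beyond the displayed inequality are needed.
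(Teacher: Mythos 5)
Your proposal is correct and takes essentially the same approach as the paper: the paper's own proof of Corollary \ref{passage} is a one-line appeal to Theorem \ref{Metrizable} and Lemma \ref{Topo}, and your argument simply makes explicit the translation (via the two-sided estimate and the identification $N_x(t)=\{y : d_\LL(D(x,y),\mathcal{H}_0)<t\}$) of Cauchy sequences, covers, and dense sets that the paper leaves implicit. The definitional care you flag, namely that probabilistic compactness bundles completeness with the cover condition and so requires assertion $(1)$, is exactly the right bookkeeping.
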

\begin{proof} It is a direct consequence of  Theorem \ref{Metrizable} using Lemma \ref{Topo}.
\end{proof}

 Notice that several results in the litterature proved for probabilistic metric spaces could be easily deduced from Corollary \ref{passage} and Theorem \ref{Metrizable}. For instance, recall that a Baire space is a topological space such that every intersection of a countable collection of open dense sets is also dense. In \cite{SHE0},  H. Sherwood proved  that a complete Menger space under a continuous $t$-norm, equipped with the topology $\tau$ induced by the  strong neighborhood system $\lbrace N_x(t); x\in G, t > 0 \rbrace$  is a Baire space. Now, Theorem \ref{Metrizable} expressing the fact that as soon as the triangle function is continuous then the induced topology is metrizable, we immediately obtain the following result.
\begin{proposition}  Let $(G, D, \star)$ be a probabilistic complete metric space such that $\star$ is continuous. Let $\tau$ be the  topology induced by strong neighborhood system $\lbrace N_x(t); x\in G, t > 0 \rbrace$ (see Theorem \ref{Metrizable}). Then, $(G, \tau)$ is a Baire space.
\end{proposition}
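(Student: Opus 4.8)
The plan is to deduce the statement directly from the classical Baire category theorem for complete metric spaces, using Theorem~\ref{Metrizable} to identify $(G,\tau)$ with the metric space $(G,\sigma_D)$ and Corollary~\ref{passage} to transport completeness. The whole argument rests on the observation that being a Baire space is a purely topological property, so it suffices to produce a homeomorphic model that is complete as a metric space.

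First, I would invoke Theorem~\ref{Metrizable}: since $\star$ is continuous, the identity map $i:(G,\tau)\to(G,\sigma_D)$ is a (uniform) homeomorphism. Consequently $\tau$ is exactly the topology induced by the metric $\sigma_D$, and the defining ingredients of the Baire property, namely open sets, dense sets, and countable intersections, are literally the same objects in $(G,\tau)$ and in $(G,\sigma_D)$.

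Next, since $(G,D,\star)$ is assumed to be a probabilistic \emph{complete} metric space and $\star$ is continuous, Corollary~\ref{passage}$(1)$ yields that $(G,\sigma_D)$ is a complete metric space. Applying the classical Baire category theorem, $(G,\sigma_D)$ is then a Baire space, so that every countable intersection of open dense subsets of $(G,\sigma_D)$ is dense. Transporting this conclusion through the homeomorphism $i$ of the first step, under which open dense sets correspond to open dense sets and density is preserved, we obtain that every countable intersection of open dense subsets of $(G,\tau)$ is dense; that is, $(G,\tau)$ is a Baire space.

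There is essentially no genuine obstacle here, as all the substantial work is already contained in Theorem~\ref{Metrizable} and Corollary~\ref{passage}. The only point meriting a word is that the definition of a Baire space is phrased in terms of notions (open, dense, countable intersection) that are invariant under homeomorphism, so the property passes from $(G,\sigma_D)$ to the homeomorphic space $(G,\tau)$ with no further verification.
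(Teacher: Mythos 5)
Your proposal is correct and follows exactly the route the paper intends: the paper presents this proposition as an immediate consequence of Theorem~\ref{Metrizable} (identifying $\tau$ with the $\sigma_D$-topology) and Corollary~\ref{passage} (transferring completeness), after which the classical Baire category theorem applies. Your only addition is to spell out the (standard) fact that the Baire property is a topological invariant, which the paper leaves implicit.
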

In the same spirit, we also easily recover the following proposition already proven by other means in \cite[Theorem 2.2, Theorem 2.3]{MON}.
\begin{proposition} Let $(K,D,\star)$ be a probabilistic metric space. Suppose that the triangle function $\star$ is continuous. Then, 
	
	$(1)$ $(K, D, \star)$ is compact as probabilistic metric space iff every sequence of $K$ has a convergent subsequence.
	
	$(2)$ If $(K, D, \star)$ is compact as probabilistic metric space, then it is separable.
\end{proposition}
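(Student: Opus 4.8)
The plan is to transfer both statements to the canonical deterministic metric space $(K,\sigma_D)$ via the uniform homeomorphism $i:(K,\tau)\to(K,\sigma_D)$ furnished by Theorem \ref{Metrizable}, and then invoke the corresponding classical facts about metric spaces. The continuity of $\star$ is exactly what makes Theorem \ref{Metrizable} and Corollary \ref{passage} applicable, so the whole argument rests on these two results together with two elementary properties of metric spaces: that compactness and sequential compactness coincide, and that a compact metric space is separable. As a preliminary step I would record the dictionary for sequences: by Lemma \ref{Topo} together with Theorem \ref{Metrizable}, the topology $\tau$ generated by the strong neighborhood system coincides with the topology of $\sigma_D$, so a sequence $(z_n)\subset K$ converges weakly in the probabilistic sense (i.e. $d_\LL(D(z_n,z),\mathcal{H}_0)\to 0$) to $z$ if and only if it converges to $z$ in $(K,\sigma_D)$. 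In particular, ``every sequence of $K$ has a convergent subsequence'' means exactly that $(K,\sigma_D)$ is sequentially compact.

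For part $(1)$, I would argue as follows. By Corollary \ref{passage}$(2)$, the space $(K,D,\star)$ is compact as a probabilistic metric space if and only if $(K,\sigma_D)$ is a compact metric space. Since in a metric space compactness is equivalent to sequential compactness, $(K,\sigma_D)$ is compact if and only if every sequence in $(K,\sigma_D)$ admits a convergent subsequence, which by the dictionary above is equivalent to the stated condition on $K$. Chaining these equivalences yields $(1)$.

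For part $(2)$, assume $(K,D,\star)$ is compact as a probabilistic metric space. By Corollary \ref{passage}$(2)$, $(K,\sigma_D)$ is a compact metric space, hence separable by the classical fact that every compact metric space is separable (it is totally bounded, so a countable dense set is produced by taking, for each $n$, a finite $\frac{1}{n}$-net). Finally, Corollary \ref{passage}$(3)$ transfers separability of $(K,\sigma_D)$ back to separability of $(K,D,\star)$, giving $(2)$.

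The argument contains no genuine obstacle once Theorem \ref{Metrizable} and Corollary \ref{passage} are in hand; the only point requiring care is the verification that the probabilistic notion of convergence of a sequence (weak convergence of $D(z_n,z)$ to $\mathcal{H}_0$) coincides with convergence in $(K,\sigma_D)$, which is where the uniform homeomorphism of Theorem \ref{Metrizable} and the characterization of $N_x(t)$ in Lemma \ref{Topo} are essential. A secondary subtlety is to check that the definition of a compact probabilistic metric space (completeness together with finite subcovers by $t$-neighborhoods) is matched correctly to metric compactness, but this matching is precisely the content of Corollary \ref{passage}$(2)$.
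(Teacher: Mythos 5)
Your proposal is correct and follows essentially the same route as the paper, which presents this proposition as an immediate consequence of Theorem \ref{Metrizable} and Corollary \ref{passage} (transferring compactness, sequential compactness and separability to the deterministic metric space $(K,\sigma_D)$ and back). The paper leaves the details implicit; your write-up simply makes explicit the dictionary between weak convergence of $D(z_n,z)$ to $\mathcal{H}_0$ and convergence in $\sigma_D$, which is exactly the intended argument.
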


We end the section by showing that the metric $\sigma_D$ is canonical in the following sens. We know that every (complete) metric space induce a probabilistic (complete) metric space. Indeed, if $d$ is a (complete) metric on $G$ and $\star$ is a triangle function on $\Delta^+$ satisfying $\mathcal{H}_a\star \mathcal{H}_b=\mathcal{H}_{a+b}$ for all $a, b\in \R^+$  (see references \cite{S.S} and \cite{HP}), then $(G, D, \star)$ is a probabilistic (complete) metric space, where
$$D(p,q)=\mathcal{H}_{d(p,q)}, \hspace{1mm} \forall p,q \in G.$$ 

\vskip5mm
Using  Proposition \ref{H} below, we get that 
\begin{eqnarray*}
d_\LL(D(p,q), \mathcal{H}_ {0})\leq \sigma_D(p,q):=\sup_{z\in G} d_\LL(D(p,z), D(z,q))&=&\sup_{z\in G} d_\LL(\mathcal{H}_{d(p,z)}, \mathcal{H}_ {d(z,q)})\\
                                                                               &\leq& \sup_{z\in G} \min(1, |d(p,z)-d(z,q)|)\\
                                                                               &=& \min(1, d(p,q))\\
                                                                               &=& d_\LL(\mathcal{H}_{d(p,q)}, \mathcal{H}_ {0})\\
                                                                               &=&  d_\LL(D(p,q), \mathcal{H}_ {0})
\end{eqnarray*}
Thus, we have the equality
\[
d_\LL(D(p,q), \mathcal{H}_ {0})=\sigma_D(p,q)=\min(1, d(p,q)).
\]
It follows that $\sigma_D(p,q)=d(p,q)$, for all $p,q\in G$ such that $d(p,q)\leq 1$. In particular, $\sigma_D$ and $d$ coincides if $(G,d)$ is of diameter less than $1$.

%%%%%%%%%
%%%%%%%%%
\begin{proposition} \label{H} Let $a, b \geq 0$. Then,
\[
d_\LL(\mathcal H_a,\mathcal H_b)=\min\left(1,|b-a|,\frac{1}{\min(a,b)}\right),
\]
and, in particular,
\begin{equation}\label{eq:3}
d_\LL(\mathcal H_a,\mathcal H_b)\leq \min\left(1,|b-a|\right)=d_\LL(\mathcal H_{|b-a|},\mathcal H_0).
\end{equation}
\end{proposition}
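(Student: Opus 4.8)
The plan is to reduce to the case $a\le b$ using the symmetry of $d_\LL$, to compute the relevant level sets $A^h$ explicitly, and then to extract the infimum defining the distance. First I would observe that for $a\le b$ one has $\mathcal H_b\le\mathcal H_a$ pointwise, so by remark (i) following the definition of $d_\LL$ the set $A^h_{\mathcal H_a,\mathcal H_b}$ equals $[0,+\infty[$, whence
\[
d_\LL(\mathcal H_a,\mathcal H_b)=\inf\{h>0 : [0,h^{-1}[\subset A^h_{\mathcal H_b,\mathcal H_a}\}.
\]
Then, for $0<h<1$, I would compute $A^h_{\mathcal H_b,\mathcal H_a}=\{t\ge 0 : \mathcal H_a(t)\le \mathcal H_b(t+h)+h\}$ directly: any $t\le a$ belongs to it since the left-hand side vanishes, while for $t>a$ the inequality $1\le \mathcal H_b(t+h)+h$ forces $\mathcal H_b(t+h)=1$ (as $h<1$), i.e. $t>b-h$. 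This yields $A^h_{\mathcal H_b,\mathcal H_a}=[0,+\infty[$ when $h\ge b-a$, and $A^h_{\mathcal H_b,\mathcal H_a}=[0,a]\,\cup\,]b-h,+\infty[$, with gap $]a,b-h]$, when $0<h<b-a$.

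Next I would translate the inclusion $[0,h^{-1}[\subset A^h_{\mathcal H_b,\mathcal H_a}$ into an explicit condition on $h$. When $h\ge b-a$ there is no gap and the inclusion holds automatically; when $0<h<b-a$ the interval $[0,h^{-1}[$ avoids the gap $]a,b-h]$ if and only if $h^{-1}\le a$, that is $h\ge 1/a$ (for $a>0$; for $a=0$ no admissible $h$ below $b-a$ exists). Recalling from remark (ii) that every $h\ge 1$ is admissible, I would conclude that inside $(0,1)$ the admissible $h$ are precisely those with $h\ge\min(b-a,1/a)$, and taking the infimum over all admissible $h$ gives $d_\LL(\mathcal H_a,\mathcal H_b)=\min(1,b-a,1/a)$. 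Since $\min(a,b)=a$ and $|b-a|=b-a$ under our convention, this is exactly the claimed formula, the case $a=0$ being covered by reading $1/a=+\infty$.

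The main obstacle is the gap-avoidance step: the third term $1/\min(a,b)$ is precisely the trace of the behaviour at infinity that the modified L\'evy distance records (remark (iii)), and one must argue carefully that whenever $h^{-1}>a$ some point slightly above $a$ lies simultaneously in $[0,h^{-1}[$ and in the gap $]a,b-h]$, so that the inclusion genuinely fails there; this is where the estimate is sharp rather than merely an upper bound. Finally, the \emph{in particular} inequality \eqref{eq:3} follows by simply dropping the third term, and the equality $d_\LL(\mathcal H_{|b-a|},\mathcal H_0)=\min(1,|b-a|)$ follows from the formula applied to the pair $(\,|b-a|,0)$, since $\min(|b-a|,0)=0$ forces $1/\min=+\infty$ and leaves $\min(1,|b-a|)$.
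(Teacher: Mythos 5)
Your proof is correct and takes essentially the same route as the paper's: both reduce to the ordered case, compute $A^h_{\mathcal H_b,\mathcal H_a}=[0,a]\cup\,]\max(a,b-h),+\infty[$ explicitly, characterize the admissible $h$ via the gap $]a,b-h]$, and take the infimum. The only difference is bookkeeping — the paper splits into the cases $a>1$ and $a\leq 1$, while you split on the ranges of $h$ and obtain the condition $h\geq\min(b-a,1/a)$ uniformly — which does not change the substance of the argument.
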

Notice that the inequality \eqref{eq:3} expresses the more general fact that, for all $\lambda>0$ and for all $F$, $G\in\Delta^+$,
\[
d_\LL(\tau_\lambda F,\tau_\lambda G)\leq d_\LL(F,G),
\]
which is a consequence of the following property,
\[
\forall \lambda>0, \quad \left\{h>0, [0,h^{-1}[\subset A_{F,G}^h\right\} \subset \left\{h>0, [0,h^{-1}[\subset A_{\tau_\lambda F,\tau_\lambda G}^h\right\},
\]
where $\tau_\lambda F(t)=F(t-\lambda)$. This contraction property is an equality for the standard Levy metric while Proposition~\eqref{H} shows that it is not true for the modified version $d_\LL$.

\begin{proof}
In this proof, we will assume without loss of generality that $a<b$ and use the shortened notation
\[
A_{a,b}^h:=A_{\mathcal H_a,\mathcal H_b}^h=\left\{t\geq 0, \mathcal H_a(t)\leq\mathcal H_b(t+h)+h\right\},
\]
since in this case we have $\mathcal H_a\geq\mathcal H_b$. Notice that the inequality
\[
H_a(t)\leq\mathcal H_b(t+h)+h
\]
is immediate for $t\in[0,a]$, while if $t>a$ and since $h<1$, it is equivalent to
\[
\mathcal H_b(t+h)\geq 1-h >0,
\]
that is $t+h>b$. As a consequence,
\[
A_{a,b}^h=[0,a]\cup\left(]a,+\infty[\cap]b-h,+\infty[\right)\\ = [0,a]\cup]\max(a,b-h),+\infty[.
\]
We then have $2$ cases :
\begin{itemize}
\item If $a>1$, then for all $h\geq a^{-1}$, $[0,h^{-1}[\subset[0,a]\subset A_{a,b}^h$. In addition, if $h<a^{-1}$, then $[0,h^{-1}[\subset A_{a,b}^h$ if and only if $b-h\leq a$, that is $h\geq b-a$.
This leads to
\[
\left\{h>0, [0,h^{-1}[\subset A_{a,b}^h\right\} = [a^{-1},+\infty[\cup [b-a,+\infty[=[\min(a^{-1},b-a),+\infty[,
\]
hence in this case, $d_\LL(\mathcal H_a,\mathcal H_b)=\min(a^{-1},b-a)=\min(1,a^{-1},b-a)$.
\item If $a\leq 1$, we have $h^{1}>a$ for all $h\in]0,1[$, hence $[0,h^{-1}[\subset A_{a,b}^h$ if and only if $b-h\leq a$, that is if $h\geq b-a$. It follows that
\[
\left\{h>0, [0,h^{-1}[\subset A_{a,b}^h\right\} = [1,+\infty]\cup\left(]0,1[\cap[\min(1,b-a),+\infty[\right)=[\min(1,b-a),+\infty[,
\]
hence, in this case, $d_\LL(\mathcal H_a,\mathcal H_b)=\min(1,b-a)=\min(1,a^{-1},b-a)$.
\end{itemize}
This concludes the proof.
\end{proof}
%%%%%%%%%%%%%
%%%%%%%%%%%%%

%\begin{proof} To see the part $(1)$, suppose that $(K, D, \star)$ is compact as probabilistic metric space and let $(x_n)$ be a sequence. From Corollay \ref{passage},  $(K,\sigma_D)$ is a compact metric space and so there exists a subsequence $(x_{n_k})$ that converges to some $x\in K$ for the metric $\sigma_D$. Since $d_\LL(D(x_{n_k}, x),\mathcal{H}_0)\leq \sigma_D(x_{n_k}, x))$, we get that $d_\LL(D(x_{n_k}, x),\mathcal{H}_0)\to 0$, equivalently, $D(x_{n_k}, x)\,{\xrightarrow {\textnormal{w}}}\,\mathcal{H}_0$. Conversely, suppose that every sequence of $(K, D, \star)$ has a convergent subsequence. It follows that every sequence of the metric space $(K, \sigma_D)$ has a convergent subsequence. Thus, $(K, \sigma_D)$ is a compact metric space. Hence, $(K, D, \star)$ is compact as probabilistic metric space by Corollary \ref{passage}. The part $(2)$ can be proved in a similar way.
%\end{proof}
\section{Fixed point and contraction} \label{S3}
%Sehgal and Bharucha-Reid introduced in 1972 the notion of probabilistic $q$-contraction (q\in (0,1)$ in probabilistic metric space (see \cite{SB})
This section is divided on two subsections. In Subsection \ref{S31}, we give two new fixed point theorems and in Subsection \ref{S32}, we give some general examples of $k$-Lipschitz triangle functions constructed canonically from $k$-Lipschitz $t$-norms.
\subsection{Fixed point theorem} \label{S31}
Let us start from the following probabilistic notion of contraction introduced by Hicks  (see, \cite{HP}). 

\begin{definition} Let $(G,D,\star)$ be a probabilistic metric space. A map $f: G\to G$ is said to be a $C$-contraction  if there exists $q\in (0,1)$ such that for every $x, y \in G$ and every $t>0$
$$D(x,y)(t)> 1-t \Longrightarrow D(f(x),f(y))(qt)> 1-qt.$$
\end{definition}
\begin{lemma}  \label{C-contraction} A map $f: G\to G$ is  a $C$-contraction with constant $q$ iff for all $x, y \in G$,
$$d_\LL(D(f(x),f(y)), \mathcal{H}_0)\leq q d_\LL(D(x,y), \mathcal{H}_0).$$ 
\end{lemma}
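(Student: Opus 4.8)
The plan is to reduce the entire statement to a single elementary fact about real numbers, by using Lemma~\ref{Topo} to rewrite both the hypothesis and the conclusion of the $C$-contraction implication in terms of the modified L\'evy distance. Fix $x,y\in G$ and abbreviate $\alpha:=d_\LL(D(x,y),\mathcal{H}_0)$ and $\beta:=d_\LL(D(f(x),f(y)),\mathcal{H}_0)$. Recalling that $N_x(t)=\lbrace y: D(x,y)(t)>1-t\rbrace$, Lemma~\ref{Topo} gives $D(x,y)(t)>1-t$ if and only if $\alpha<t$; applying the same lemma to the pair $(f(x),f(y))$ with parameter $qt>0$ gives $D(f(x),f(y))(qt)>1-qt$ if and only if $\beta<qt$. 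Hence the defining implication of a $C$-contraction, $D(x,y)(t)>1-t\Rightarrow D(f(x),f(y))(qt)>1-qt$, is exactly the implication $\alpha<t\Rightarrow\beta<qt$, required to hold for every $t>0$.

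It then remains to establish the equivalence between the requirement that $\alpha<t\Rightarrow\beta<qt$ for all $t>0$ and the inequality $\beta\leq q\alpha$. The direction I expect to be immediate is the reverse one: if $\beta\leq q\alpha$ and $\alpha<t$, then $\beta\leq q\alpha<qt$. For the forward direction I would test the implication on every $t>\alpha$; since $\alpha\geq 0$, each such $t$ is positive, so the hypothesis yields $\beta<qt$ for all $t>\alpha$, and taking the infimum over $t>\alpha$ together with $\inf_{t>\alpha}qt=q\alpha$ gives $\beta\leq q\alpha$. Translating back, $\beta\leq q\alpha$ reads precisely $d_\LL(D(f(x),f(y)),\mathcal{H}_0)\leq q\,d_\LL(D(x,y),\mathcal{H}_0)$; as $x,y$ were arbitrary, this proves both directions of the lemma at once.

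The argument is essentially free of obstacles; the only point demanding care is the strict-versus-nonstrict bookkeeping. The $C$-contraction definition delivers only strict inequalities $\beta<qt$, so the passage to the nonstrict bound $\beta\leq q\alpha$ genuinely requires the limiting step $t\to\alpha^+$ rather than a direct substitution $t=\alpha$ (illegitimate in general, and in the edge case $\alpha=0$ not even permitted since $t$ must be positive). The case $\alpha=0$ is handled automatically: the infimum argument still forces $\beta\leq 0$, hence $\beta=0$, and conversely $\beta\leq q\alpha=0$ makes the implication hold vacuously for all sufficiently small $t$.
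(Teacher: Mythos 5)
Your proof is correct and follows essentially the same route as the paper: both translate the $C$-contraction condition via Lemma~\ref{Topo} into the inequality $d_\LL(D(x,y),\mathcal{H}_0)<t \Rightarrow d_\LL(D(f(x),f(y)),\mathcal{H}_0)<qt$, and both obtain the forward direction by letting $t$ decrease to $d_\LL(D(x,y),\mathcal{H}_0)$ (your infimum over $t>\alpha$ is the paper's $t_\varepsilon=\alpha+\varepsilon$, $\varepsilon\to 0$). Your write-up is in fact slightly more complete, since you spell out the converse that the paper dismisses as straightforward.
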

\begin{proof}  From Lemma \ref{Topo}, we have that for every $x, y \in G$, $D(x,y)(t)> 1-t$ if and only if $d_\LL(D(x,y), \mathcal{H}_0) < t$.  For every $\varepsilon >0$, set $t_\varepsilon= d_\LL(D(x,y), \mathcal{H}_0)+\varepsilon>0$. Then, $D(x,y)(t_\varepsilon)> 1-t_\varepsilon$. Suppose that $f$ is a $C$-contraction, then we have that $D(f(x),f(y))(qt_\varepsilon)> 1-qt_\varepsilon$ which is equivalent to
	\[d_\LL(D(f(x),f(y), \mathcal{H}_0)\leq qt_\varepsilon=q(d_\LL(D(x,y), \mathcal{H}_0)+\varepsilon).\]
	 Sending $\varepsilon$ to $0$, we get $d_\LL(D(f(x),f(y), \mathcal{H}_0)\leq q d_\LL(D(x,y), \mathcal{H}_0)$. The converse is straightforward.
\end{proof}
Hicks proved that a $C$-contraction map in Menger space under the minimum $t$-norm $T_M(a,b)=\min(a,b)$ has a unique fixed point. We can find a extension of this result for generalised $C$-contraction in Menger space in \cite{HP}. We introduce the following new definition of contraction.
\begin{definition} Let $(G,D,\star)$ be a probabilistic metric space. Suppose that $\star$ is a continuous triangle function (hence uniformly continuous) and let $\omega_\star$ be a modulus of uniform continuity of $\star$. A map $f: G\to G$ is said to be a $\omega_\star$-contraction  if there exists $q\in (0,1)$ such that for every $x, y \in G$ 
$$\omega_\star[d_\LL(D(f(x),f(y)), \mathcal{H}_0)]\leq q \omega_\star[d_\LL(D(x,y), \mathcal{H}_0)].$$
\end{definition}
\begin{remark} Using Lemma \ref{C-contraction}, the notion of $\omega_\star$-contraction concides with the $C$-contraction, when the triangle function $\star$ is $k$-Lipschitz since in this case $\omega_\star(t)=kt$ for all $t\geq 0$ is a modulus of uniform continuity. Examples of $k$-Lipschitz triangle functions are given in Proposition \ref{Tri}. The original result of Hicks is a particular case corresponding to the $1$-Lipschitz triangle function $\star_{T_M}$.
\end{remark}

Using Theorem \ref{Metrizable} and the Ekeland variational principle, we give below an extension of the result of Hicks in probabilistic metric spaces which are not necesarily Menger spaces, where the triangle function $\star$ is continuous. Notice that this result seems to be new even in the non probabilistic setting.
\begin{theorem} \label{fix1} Let $(G,D,\star)$ be a probabilistic complete metric space, where $\star$ is continuous triangle function with modulus of uniform continuity $\omega_\star$. Let $f: G\to G$  be a $\omega_\star$-contraction with a constant of contraction $q\in (0, 1)$. Then, $f$ has a unique fixed point $x^*\in G$.
\end{theorem}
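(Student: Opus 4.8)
The plan is to transfer the problem to the canonical deterministic metric space $(G,\sigma_D)$, which is complete by Corollary \ref{passage}$(1)$ since $(G,D,\star)$ is assumed complete, and then to run a Caristi--Ekeland argument there. Throughout I would write $\alpha(x,y):=d_\LL(D(x,y),\mathcal{H}_0)$ and keep in mind the two inequalities of Theorem \ref{Metrizable}, namely $\alpha(x,y)\le\sigma_D(x,y)\le\omega_\star(\alpha(x,y))$; one may assume without loss of generality that $\omega_\star$ is continuous and nondecreasing.

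First I would record two consequences of the $\omega_\star$-contraction hypothesis. Setting $u(x):=\omega_\star(\alpha(x,f(x)))$ and applying the contraction inequality to the pair $(x,f(x))$ gives $u(f(x))\le q\,u(x)$ for every $x\in G$. Combined with the right-hand inequality of Theorem \ref{Metrizable}, namely $\sigma_D(x,f(x))\le u(x)$, this yields a Caristi-type inequality: with $\varphi:=\tfrac{1}{1-q}u\ge 0$,
\[
\sigma_D(x,f(x))\le u(x)\le \frac{1}{1-q}\bigl(u(x)-u(f(x))\bigr)=\varphi(x)-\varphi(f(x)).
\]

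Next I would check the hypotheses of the Ekeland variational principle for $\varphi$ on the complete space $(G,\sigma_D)$: the function $\varphi$ is bounded below (by $0$) and lower semicontinuous. The latter reduces to continuity of $x\mapsto\alpha(x,f(x))$, for which the key point --- and the only genuinely technical step --- is that $f$ is $\sigma_D$-continuous. This follows directly from the contraction, since $\sigma_D(f(x),f(y))\le\omega_\star(\alpha(f(x),f(y)))\le q\,\omega_\star(\alpha(x,y))$, and as $\sigma_D(x,y)\to 0$ we have $\alpha(x,y)\le\sigma_D(x,y)\to 0$, hence $\omega_\star(\alpha(x,y))\to\omega_\star(0)=0$. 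Granting continuity of $f$, the estimate $d_\LL(D(x,f(x)),D(x',f(x')))\le\sigma_D(x,x')+\sigma_D(f(x),f(x'))$, which comes from the symmetry of $D$ together with the definition of $\sigma_D$, shows that $x\mapsto D(x,f(x))$ is $d_\LL$-continuous, whence $\varphi$ is continuous.

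Finally I would conclude. Fix $\varepsilon\in(0,1)$ and apply Ekeland's principle to get $v\in G$ with $\varphi(w)+\varepsilon\,\sigma_D(w,v)\ge\varphi(v)$ for all $w\in G$. Taking $w=f(v)$ gives $\varphi(v)-\varphi(f(v))\le\varepsilon\,\sigma_D(v,f(v))$, which together with the Caristi inequality forces $\sigma_D(v,f(v))\le\varepsilon\,\sigma_D(v,f(v))$, hence $\sigma_D(v,f(v))=0$ and $f(v)=v$. Uniqueness is then immediate: if $f(x^*)=x^*$ and $f(y^*)=y^*$, the contraction gives $\omega_\star(\alpha(x^*,y^*))\le q\,\omega_\star(\alpha(x^*,y^*))$, so $\omega_\star(\alpha(x^*,y^*))=0$, and by the left--right inequalities of Theorem \ref{Metrizable} we get $\sigma_D(x^*,y^*)\le\omega_\star(\alpha(x^*,y^*))=0$, i.e.\ $x^*=y^*$. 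I expect the main obstacle to be the verification of lower semicontinuity of $\varphi$ --- concretely, the $\sigma_D$-continuity of $f$ and of $x\mapsto D(x,f(x))$ --- rather than the Ekeland step, which is then routine.
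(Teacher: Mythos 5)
Your proof is correct and follows essentially the same route as the paper: both transfer the problem to the complete metric space $(G,\sigma_D)$ via Corollary \ref{passage} and Theorem \ref{Metrizable}, apply the Ekeland variational principle to the displacement functional $x\mapsto\omega_\star(d_\LL(D(x,f(x)),\mathcal{H}_0))$ (yours rescaled by $\tfrac{1}{1-q}$), and establish its continuity by the same two-term estimate $d_\LL(D(x,f(x)),D(y,f(y)))\leq\sigma_D(x,y)+\sigma_D(f(x),f(y))$ combined with the contraction. The only difference is organizational: you first extract a Caristi-type inequality, which lets you take any $\varepsilon\in(0,1)$ and makes the Ekeland point $v$ itself the fixed point, whereas the paper takes $\varepsilon<1-q$, $\lambda=1$ and shows $f(v)=v$ by combining the Ekeland inequality at $x=f(v)$ with the contraction.
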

\begin{proof}  By assumption, we have for all $x$, $y\in G$
	\[
	\omega_\star(d_\LL(D(f(x),f(y)), \mathcal{H}_0))\leq q \omega_\star(d_\LL(D(x,y), \mathcal{H}_0)).
	\]
	Let us consider the function $\phi: (G,\sigma_D)\to \R$ defined by $\phi(x)=\omega_\star(d_\LL(D(x,f(x)), \mathcal{H}_0))$ and prove that $\phi$ is continuous. Indeed, for $x, y\in G$, from  the triangle inequality for $d_\LL$, the definition of $\sigma_D$ and Theorem \ref{Metrizable} we have
\begin{eqnarray*}
|d_\LL(D(x,f(x)), \mathcal{H}_0) - d_\LL(D(y,f(y)), \mathcal{H}_0)| &\leq&  d_\LL(D(x,f(x)), D(f(x),y)) \\
                                                                                                        && +d_\LL(D(f(x),y), D(y,f(y)))\\
                                                                                                     &\leq& \sigma_D(x,y) +\sigma_D(f(x),f(y))\\
                                                                                                     &\leq& \sigma_D(x,y) + \omega_\star(d_\LL(D(f(x),f(y)), \mathcal{H}_0))\\
                                                                                                     &\leq& \sigma_D(x,y) + q \omega_\star(d_\LL(D(x,y), \mathcal{H}_0)).
\end{eqnarray*}
From the continuity of $\omega_\star$, we have that $$d_\LL(D(x,y), \mathcal{H}_0)\leq \sigma_D(x,y) \to 0 \Longrightarrow \omega_\star(d_\LL(D(x,y), \mathcal{H}_0))\to 0.$$ Thus, from the above inequalities, the function $x\mapsto d_\LL(D(x,f(x)), \mathcal{H}_0) $ is continuous from $(G,\sigma_D)$ into $\R$, and by composing it with the uniformly continuous function $\omega_\star$, we get that $\phi$ is continuous.
\vskip5mm
Now, by the Ekeland variational principle \cite{Ek}  (since $(G,\sigma_D)$ is a complete metric space by Corollary \ref{passage}), let $\varepsilon> 0$ and $u\in G$ such that $\phi (u) \leq  \inf_G \phi+ \varepsilon$. Then,  for all $\lambda >0$ there exists $v\in G$ :

$(i)$ $\phi (v) \leq  \phi (u)$ ;

$(ii)$ $\sigma_D(u,v) \leq \lambda$;

$(iii)$ for all $x\in G$ $x\neq v$, $\phi(v)< \phi(x) +\frac{\varepsilon}{\lambda}\sigma_D(x,v)$.

Now, let us choose $\varepsilon <1-q$ and set $\lambda =1$. Using Theorem \ref{Metrizable} and $(iii)$, we have that  
\begin{eqnarray}\label{phi}
\phi(v)\leq \phi(x) +\varepsilon \omega_\star(d_\LL(D(x,v), \mathcal{H}_0)), \textnormal{ for all } x\in G.
\end{eqnarray}
We claim that $x^*:=f(v)$ is the unique fixed point of $f$.  Indeed, we have
\begin{eqnarray}\label{phi1}
\omega_\star(d_\LL(D(x^*,f(x^*)), \mathcal{H}_0))=\omega_\star(d_\LL(D(f(v),f(x^*)), \mathcal{H}_0))\leq q\omega_\star(d_\LL(D(v, x^*)). 
\end{eqnarray}
By (\ref{phi}) with $x^*$ and $v$, we have for all  $x\in G$
\begin{eqnarray}\label{phi2}
\omega_\star(d_\LL(D(v,x^*), \mathcal{H}_0))&=&\omega_\star(d_\LL(D(v,f(v)), \mathcal{H}_0))\nonumber\\
                                                                         &\leq& \omega_\star(d_\LL(D(x^*,f(x^*)), \mathcal{H}_0))+\varepsilon \omega_\star(d_\LL(D(x^*,v), \mathcal{H}_0)).
\end{eqnarray} 
Combining (\ref{phi1}) and (\ref{phi2}), we get 
$$\omega_\star(d_\LL(D(v,x^*), \mathcal{H}_0))\leq (q+\varepsilon) \omega_\star(d_\LL(D(v,x^*), \mathcal{H}_0)).$$
Since $q+\varepsilon <1$, we obtain that $\omega_\star(d_\LL(D(v,x^*), \mathcal{H}_0))=0$, which implies by Theorem \ref{Metrizable} that $\sigma_D(v,x^*)=0$, that is $x^*=v$. Thus, $f(x^*)=f(v)=:x^*$. The unicity of the fixed point $x^*$ is immediate from $q<1$.
\end{proof}

Applying the Banach fixed point we give the following extenstion of Hicks's result with an estimation of convergence of sequences $x_{n+1}=f(x_n)$. Note that we recover the Hicks's result with $\star=\star_{T_M}$ which is $k$-Lipschitz, with $k=1$.
\begin{theorem} \label{fix2}  Let $(G,D,\star)$ be a probabilistic complete metric space, where $\star$ is $k$-Lipschitz triangle function ($k\geq 1$). Let $f: G\to G$  be a $C$-contraction with a constant of contraction $q\in ]0, \frac 1 k[$. Then, $f$ has a unique fixed point $x^*\in G$. Moreover, every sequence $(x_n)$ of $G$ such that $x_{n+1}=f(x_n)$, satisfies: for every $t>0$
$$D(x_1,x_0)(t)> 1-t \Longrightarrow D(x_n,x^*)(\frac{k(kq)^n}{1-kq}t)> 1-\frac{k(kq)^n}{1-kq}t,$$
or equivalently,
$$d_\LL(D(x_n,x^*),\mathcal{H}_0)\leq \frac{k(kq)^n}{1-kq}d_\LL(D(x_1,x_0),\mathcal{H}_0).$$
In particular, $d_\LL(D(x_n,x^*),\mathcal{H}_0)\to 0$, when $n\to +\infty$.
\end{theorem}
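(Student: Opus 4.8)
The plan is to push the entire problem through the deterministic metric $\sigma_D$ and then invoke the classical Banach fixed point theorem. The reason this works is that Theorem \ref{Metrizable}, in its $k$-Lipschitz form, supplies the two-sided comparison
\[
d_\LL(D(x,y),\mathcal{H}_0)\leq \sigma_D(x,y)\leq k\, d_\LL(D(x,y),\mathcal{H}_0),
\]
while Corollary \ref{passage}(1) guarantees that $(G,\sigma_D)$ is a complete metric space. So the whole argument reduces to recognizing $f$ as an honest contraction on $(G,\sigma_D)$ and then translating the Banach a priori estimate back to the probabilistic side.

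First I would show that $f$ is a contraction on $(G,\sigma_D)$ with ratio $kq<1$. Applying the right-hand inequality of Theorem \ref{Metrizable} to the pair $(f(x),f(y))$, then Lemma \ref{C-contraction} (which reformulates the $C$-contraction property as $d_\LL(D(f(x),f(y)),\mathcal{H}_0)\leq q\, d_\LL(D(x,y),\mathcal{H}_0)$), and finally the left-hand inequality of Theorem \ref{Metrizable} to $(x,y)$, I obtain
\[
\sigma_D(f(x),f(y))\leq k\, d_\LL(D(f(x),f(y)),\mathcal{H}_0)\leq kq\, d_\LL(D(x,y),\mathcal{H}_0)\leq kq\, \sigma_D(x,y).
\]
Since the hypothesis $q\in\left]0,\frac1k\right[$ forces $kq<1$, the Banach fixed point theorem applies on the complete space $(G,\sigma_D)$ and yields a unique fixed point $x^*$, which is automatically the unique fixed point of $f$ viewed as a set map.

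Next I would extract the rate of convergence. For an orbit $x_{n+1}=f(x_n)$, the standard a priori bound of the Banach theorem gives $\sigma_D(x_n,x^*)\leq \frac{(kq)^n}{1-kq}\sigma_D(x_1,x_0)$, obtained by telescoping $\sigma_D(x_{j+1},x_j)\leq (kq)^j\sigma_D(x_1,x_0)$ and summing the geometric tail. To return to the probabilistic side I would again use both inequalities of Theorem \ref{Metrizable}, but now in the opposite roles: the left inequality on $(x_n,x^*)$ and the right inequality on $(x_1,x_0)$, the latter producing the factor $k$ on the initial data,
\[
d_\LL(D(x_n,x^*),\mathcal{H}_0)\leq \sigma_D(x_n,x^*)\leq \frac{(kq)^n}{1-kq}\sigma_D(x_1,x_0)\leq \frac{k(kq)^n}{1-kq}\, d_\LL(D(x_1,x_0),\mathcal{H}_0).
\]
The implication form of the statement is then just the translation of this inequality through Lemma \ref{Topo}: since $D(x,y)(t)>1-t$ is equivalent to $d_\LL(D(x,y),\mathcal{H}_0)<t$, the hypothesis $D(x_1,x_0)(t)>1-t$ reads $d_\LL(D(x_1,x_0),\mathcal{H}_0)<t$, and feeding this into the displayed bound gives $d_\LL(D(x_n,x^*),\mathcal{H}_0)<\frac{k(kq)^n}{1-kq}t$, i.e. exactly the claimed membership $x^*\in N_{x_n}\!\left(\frac{k(kq)^n}{1-kq}t\right)$.

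I do not expect a serious obstacle here once Theorem \ref{Metrizable} is available; the only point demanding genuine care is the bookkeeping of the constant. One must apply the two inequalities of Theorem \ref{Metrizable} in the correct orientation at each stage — the right inequality (carrying the factor $k$) on the image pair when establishing the contraction, and again the right inequality on the initial pair $(x_1,x_0)$ when converting the rate — so that the single factor $k$ multiplying $(kq)^n/(1-kq)$ appears exactly once. Dropping $k$ in the first step would destroy the contraction property, while introducing it in the wrong place would yield a spurious extra power of $k$ in the final estimate; tracking precisely where the factor enters is the heart of the computation.
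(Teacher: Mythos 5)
Your proposal is correct and follows essentially the same route as the paper: reduce to the complete metric space $(G,\sigma_D)$ via Lemma \ref{C-contraction} and the two-sided estimate of Theorem \ref{Metrizable}, apply the Banach fixed point theorem with ratio $kq<1$, and convert the a priori bound back to $d_\LL$ with the single factor $k$ entering through the initial pair $(x_1,x_0)$. The only cosmetic difference is that you invoke Lemma \ref{Topo} directly for the final implication form, where the paper cites Lemma \ref{C-contraction} (whose proof rests on Lemma \ref{Topo} anyway).
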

\begin{proof} By Lemma \ref{C-contraction}, we have that $d_\LL(D(f(x),f(y), \mathcal{H}_0)\leq q d_\LL(D(x,y), \mathcal{H}_0)$, for all $x, y \in G$. Using Theorem \ref{Metrizable}, we get 
$$\sigma_D(f(x),f(y))\leq qk  \sigma_D(x,y).$$
Since $qk<1$, we can apply the Banach fixed point theorem in the complete metric space $(G,\sigma_D)$.  Thus, we obtain a unique fixed point $x^*$ such that, for all $n\in \N$
$$\sigma_D(x_n,x^*) \leq \frac{(kq)^n}{1-kq}\sigma_D(x_1,x_0) .$$ Using again Theorem \ref{Metrizable} (the $k$-Lipschitz part) we give 
$$d_\LL(D(x_n,x^*),\mathcal{H}_0)\leq \frac{k(kq)^n}{1-kq}d_\LL(D(x_1,x_0),\mathcal{H}_0),$$
which is equivalent by Lemma \ref{C-contraction} to: for all $t>0$
$$D(x_1,x_0)(t)> 1-t \Longrightarrow D(x_n,x^*)(\frac{k(kq)^n}{1-kq}t)> 1-\frac{k(kq)^n}{1-kq}t.$$
\end{proof}
%Note that $\star_{T_M}$ is $1$-Lipschitz where $T_M(u,v)=\min(u,v)$, so the above theorem applies to any $C$-contraction with $q\in (0,1)$ for $\star_{T_M}$ and in general for $1$-Lipschitz triangle function $\star$. Note also that in the above theorem, we have an estimation of convergence of any sequence $x_{n+1}=f(x_n)$ to the unique fixed point $x^*$, but only of $C$-contraction with constant $q\in (0, \frac 1 k)$. 
%Using Ekeland variational principle, we get in the following fixed point theorem witout estimation of convergence but for any $C$-contraction with constant $q\in (0, 1)$, where the triangle function $\star$ is $k$-Lipschitz.
\subsection{$k$-Lipschitz triangle function} \label{S32}
One of the standard way to construct triangle function goes in the following way. We refer to \cite{HP} for more details.
\begin{definition} We denote by $\mathcal{L}$ the set of all binary operators $L$ on $[0,+\infty[$ which satisfy the following conditions:

$(i)$ $L$ maps $[0,+\infty[^2$ to $[0,+\infty[$

$(ii)$ $L$ is non-deceasing in both coordinate

$(iii)$ $L$ is continuous on $[0,+\infty[^2$.
\end{definition}
For a $t$-norm $T$, we define the operation $\star_{T,L}$ from $\Delta^+\times \Delta^+$ to $\Delta^+$ as follows: for every $F,G\in \Delta^+$ and every $t\geq 0$
\begin{eqnarray*} 
(F\star_{T,L} G)(t)= \sup_{ L(u,v)=t} T(F(u),G(v)).
\end{eqnarray*} 
In the speciale case where $L(u,v)=u+v$ we obtain $\star_{T,L}=\star_T$.
\begin{theorem} \textnormal{ (\cite[Theorem 2.15]{HP})} if $T$ is a left-continuous $t$-norm and $L\in \mathcal{L}$ is commutative, associative, has $0$ as identity and satisfy the condition  
$$\textnormal{if }u_1<u_2 \textnormal{ and } v_1<v_2 \textnormal{ then } L(u_1,v_1) < L(u_2,v_2),$$
then, $\star_{T,L}$ is a triangle function.
\end{theorem}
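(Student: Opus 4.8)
The plan is to verify directly the five defining properties (i)--(v) of a triangle function from Definition \ref{axiom1} for the operation $\star_{T,L}$, recording at the outset the elementary facts that will be used repeatedly: monotonicity of $T$ together with its boundary condition give $T(0,a)=T(a,0)=0$ and $T(1,1)=1$, while $L$ being nondecreasing with identity $0$ gives $L(u,v)\geq\max(u,v)$ for all $u,v\geq0$ (since $L(u,v)\geq L(u,0)=u$ and $L(u,v)\geq L(0,v)=v$). Throughout, I set $(F\star_{T,L}G)(t)=0$ for $t\leq 0$, which is consistent with the formula since for $t=0$ the bound $L(u,v)\geq\max(u,v)$ forces $u=v=0$ and hence $T(F(0),G(0))=T(0,0)=0$.

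Commutativity (ii) is immediate from commutativity of both $T$ and $L$, and monotonicity (v) follows because $F\leq F'$ gives $T(F(u),K(v))\leq T(F'(u),K(v))$ for every admissible pair, so the defining supremum only increases. For the neutral element (iv), using $T(F(u),\mathcal H_0(v))=F(u)$ when $v>0$ and $=0$ when $v\leq 0$, one reduces $(F\star_{T,L}\mathcal H_0)(t)$ to $\sup\{F(u): L(u,v)=t,\ v>0\}$; the inequality $L(u,v)\geq u$ shows this is $\leq F(t)$, and for the reverse I would pick $u_0<t$ with $F(u_0)$ near $F(t)$ (possible by left-continuity of $F$) and use continuity of $L$ together with $L(u_0,t)\geq L(0,t)=t>u_0=L(u_0,0)$ to produce, via the intermediate value theorem, a $v\in(0,t]$ with $L(u_0,v)=t$.

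Next I would establish membership in $\Delta^+$ (property (i)). The range being in $[0,1]$, the value $0$ for $t\leq 0$, and the limit $1$ at $+\infty$ (take $u=v\to+\infty$, using $T(1,1)=1$ and $L(u,u)\to+\infty$) are straightforward. Monotonicity in $t$ comes from the observation that any pair with $L(u,v)=t_1\leq t_2$ can be enlarged to a pair $(u',v)$ with $u'\in[u,t_2]$ and $L(u',v)=t_2$ (intermediate value theorem, since $L(t_2,v)\geq t_2\geq t_1$), which does not decrease $T(F(\cdot),G(v))$. The genuinely delicate point is left-continuity: given $\e>0$ choose $(u_0,v_0)$ with $L(u_0,v_0)=t$ realizing the supremum up to $\e$; if that supremum is positive then necessarily $u_0,v_0>0$ (because $T(a,b)>0$ forces $a,b>0$, while $F(u)=G(u)=0$ for $u\leq0$), so one may take $u'\uparrow u_0$ and $v'\uparrow v_0$ with $u',v'>0$. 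The strict monotonicity hypothesis on $L$ then yields $s:=L(u',v')<t$, continuity of $L$ yields $s\to t$, and left-continuity of $F$, $G$ together with left-continuity of $T$ yields $T(F(u'),G(v'))\to T(F(u_0),G(v_0))$, whence $\lim_{s\uparrow t}(F\star_{T,L}G)(s)\geq(F\star_{T,L}G)(t)$; the case of zero supremum is trivial by monotonicity.

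Finally, associativity (iii) is obtained by expanding both triple products into iterated suprema and commuting the inner supremum through $T$: for fixed $a$, the identity $T(a,\sup_i b_i)=\sup_i T(a,b_i)$ holds by monotonicity and left-continuity of $T$ in its second argument. After this interchange both $(F\star_{T,L}(G\star_{T,L}K))(t)$ and $((F\star_{T,L}G)\star_{T,L}K)(t)$ reduce to $\sup_{L(u,L(v,z))=t}T(F(u),T(G(v),K(z)))$, and associativity of $T$ and of $L$ identifies them. I expect the left-continuity of $\star_{T,L}$ (and the equivalent sup-interchange used in (iii)) to be the main obstacle, since it is precisely here that left-continuity of $T$ — rather than mere monotonicity — is indispensable, and the boundary cases $u_0=0$ or $v_0=0$ must be dispatched separately as indicated above.
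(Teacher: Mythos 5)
The paper itself offers no proof of this statement: it is imported verbatim, with attribution, from \cite[Theorem 2.15]{HP}, so there is no internal argument to compare yours against. Taken on its own terms, your direct verification of axioms (i)--(v) is essentially correct and isolates the right pressure points: commutativity and monotonicity are formal; the neutral-element identity follows from $L(u,v)\geq\max(u,v)$ together with the intermediate value theorem applied to $v\mapsto L(u_0,v)$; monotonicity in $t$ again follows from the intermediate value theorem; left-continuity is correctly reduced to a near-optimal pair $(u_0,v_0)$ with $u_0,v_0>0$, strict joint monotonicity of $L$ (giving $s=L(u',v')<t$), continuity of $L$, and left-continuity of $F$, $G$ and $T$ (note that separate left-continuity of $T$ plus monotonicity does yield $T(F(u'),G(v'))\to T(F(u_0),G(v_0))$ along increasing sequences, a point you use implicitly); and associativity is correctly handled by the interchange $T(a,\sup_i b_i)=\sup_i T(a,b_i)$, valid precisely because $T$ is left-continuous and nondecreasing, after which associativity of $T$ and of $L$ identifies the two triple suprema.

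The one step that is wrong as written is the treatment of the value at $+\infty$. In this paper's $\Delta^+$ (and in Schweizer--Sklar's, which is what makes $(\Delta^+,d_\LL)$ compact), an element need only satisfy the pointwise condition $F(+\infty)=1$; it need not satisfy $\lim_{t\to+\infty}F(t)=1$. For instance, $F$ defined by $F(t)=1/2$ for $0<t<+\infty$ and $F(+\infty)=1$ lies in $\Delta^+$, and for such $F=G$ one has $T(F(u),G(v))\leq\min(F(u),G(v))\leq 1/2$ for all finite $u,v$, so your prescription ``take $u=v\to+\infty$'' produces at most $1/2$, not $1$. In fact, since $L$ takes only finite values, the defining supremum is over an empty set when $t=+\infty$ and cannot produce the value $1$ at all; the correct (and standard) repair is to read the formula as defining $F\star_{T,L}G$ on $[0,+\infty[$ only and to set $(F\star_{T,L}G)(+\infty):=1$ by convention, exactly as one does for $\star_T$. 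With that convention in place, the rest of your argument goes through and the proof is complete.
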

The above theorem works for example with $L(u,v):=L_+(u,v)=u+v$ or $L(u,v):=L_{M}(u,v)=\max(u,v)$.
\vskip5mm
Another way to construct a triangle function from a $t$-norm $T$ is the use the $t$-conorm $T^*(u,v)=1-T(1-u,1-v)$ as follows : for every $F,G\in \Delta^+$ and for every $s>0$
$$(F\star_{T^*} G)(s)= \inf_{u+v=s} T^*(F(u),G(v)).$$
Recall that a $t$-norm $T:[0,1]\times [0,1]\to [0,1]$ is $k$-Lipschitz if there exists $k\in [0,+\infty[$ such that, for all $a,b,c,d\in [0,1]$, we have 
$$|T(a,b)-T(c,d)|\leq k (|a-c|+|b-d|).$$
Since $T(x,1)=x$, we necessarily have that $k\geq 1$. Note also that the minimum $t$-norm $T_M(a,b):=\min (a,b)$ is $1$-Lipschitz. Other examples of $k$-Lipschitz $t$-norms are studied in \cite{Me1,Me2,Me3}.  In order to give examples of $k$-Lipschitz triangle functions in Proposition \ref{Tri}, we need the following lemma.
\begin{lemma} \label{Tri0} Let $T:[0,1]\times [0,1]\to [0,1]$ be a $k$-Lipschitz $t$-norm. Then, for every $(a, b), (c,d)\in [0,1]\times [0,1]$ and every $h\in [0,+\infty[$ such that $a\leq c+h_1$ and $b\leq d+h_2$ we have that 
\begin{eqnarray*} 
T(a,b)-T(c,d)\leq k (h_1+h_2)\\
T^*(a,b)-T^*(c,d)\leq k (h_1+h_2).
\end{eqnarray*}
\begin{proof} Four cases are discussed.

{\bf case 1.} If $a\leq c$ and $b\leq d$. In this case, since $T$ is a t-norm, then
\[T(a,b)-T(c,d)\leq 0 \leq k(h_1+h_2).\]

{\bf case 2.} If $a\leq c$ and $b\geq d$.  In this case, since $T$ is a t-norm, then $T(a,b)\leq T(c,b)$ and so  since it is $k$-Lipschitz we have that 
\[T(a,b)-T(c,d)\leq T(c,b) -T(c,d)\leq k|b-d|=k(b-d)\leq kh_1\leq k(h_1+h_2).\]

{\bf case 3.} If $a\geq c$ and $b\leq d$. This case is similar to case 2.

{\bf case 4.} If $a\geq c$ and $b\geq d$. In this case, since $T$ is $k$-Lipschitz we have that
\[T(a,b)-T(c,d)\leq  k (|a-c|+|b-d|)=k(a-c+b-d)\leq k(h_1+h_2).\]

The case of $T^*$ comes easily from the case of $T$.
\end{proof}
\end{lemma}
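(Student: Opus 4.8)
The plan is to bound the difference $T(a,b)-T(c,d)$ one coordinate at a time, passing through the intermediate value $T(c,b)$. Writing
$$T(a,b)-T(c,d)=\bigl(T(a,b)-T(c,b)\bigr)+\bigl(T(c,b)-T(c,d)\bigr),$$
I would estimate each bracket separately. For the first bracket I distinguish two subcases according to the sign of $a-c$: if $a\leq c$, then monotonicity of $T$ in its first argument gives $T(a,b)-T(c,b)\leq 0\leq kh_1$; if $a>c$, then $|a-c|=a-c\leq h_1$ by hypothesis, and the $k$-Lipschitz inequality yields $T(a,b)-T(c,b)\leq k|a-c|\leq kh_1$. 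In either case $T(a,b)-T(c,b)\leq kh_1$. The same dichotomy applied to the second argument (using $b\leq d+h_2$) gives $T(c,b)-T(c,d)\leq kh_2$. Adding the two bounds produces $T(a,b)-T(c,d)\leq k(h_1+h_2)$.

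For the $t$-conorm I would reduce to the case just treated through the substitution $u\mapsto 1-u$. Since $T^*(u,v)=1-T(1-u,1-v)$, one has
$$T^*(a,b)-T^*(c,d)=T(1-c,1-d)-T(1-a,1-b).$$
The hypotheses $a\leq c+h_1$ and $b\leq d+h_2$ rewrite as $1-c\leq(1-a)+h_1$ and $1-d\leq(1-b)+h_2$, so applying the inequality already proven for $T$ to the pairs $(1-c,1-d)$ and $(1-a,1-b)$ immediately gives the desired bound $k(h_1+h_2)$.

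The only real subtlety, and the step I expect to be the main obstacle, is that the hypotheses control the differences $a-c$ and $b-d$ only from above: they do not bound $|a-c|$ or $|b-d|$, so the $k$-Lipschitz estimate cannot be invoked blindly, since when an argument decreases the quantity $|a-c|$ may be arbitrarily large. The point is that precisely in those cases monotonicity of $T$ makes the corresponding bracket nonpositive, so the Lipschitz bound is never needed there. This interplay between monotonicity (for a decreasing coordinate) and the Lipschitz property (for an increasing coordinate) is exactly what makes the coordinatewise splitting succeed, and it is the same mechanism whether one organizes the argument into the two telescoping brackets above or into the four sign cases.
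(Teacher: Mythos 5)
Your proof is correct and follows essentially the same approach as the paper's: the telescoping split through the intermediate value $T(c,b)$, with monotonicity handling a decreasing coordinate and the $k$-Lipschitz bound handling an increasing one, is just a repackaging of the paper's four sign cases. Your explicit substitution $u\mapsto 1-u$ for the $t$-conorm cleanly fills in what the paper dismisses as ``comes easily from the case of $T$.''
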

In the following proposition, we consider the cases where $L(u,v):=L_+(u,v)=u+v$ and $L(u,v):=L_M(u,v)=\max(u,v)$.
\begin{proposition} \label{Tri} Let $T:[0,1]\times [0,1]\to [0,1]$ be a $k$-Lipschitz $t$-norm ($k\geq 1$). Then, the triangle functions $\star_T$; $\star_{T,L_M}$; $\star_{T^*}$ and $\T$ are $k$-Lipschitz , where for all $F, G\in \Delta^+$ and all $s>0$
\begin{eqnarray*} 
(F\star_T G)(s)&=&\sup_{u+v=s} T(F(u), G(v)),
\end{eqnarray*}
\begin{eqnarray*} 
(F\star_{T^*} G)(s)= \inf_{u+v=s} T^*(F(u),G(v)),
\end{eqnarray*}
\begin{eqnarray*} 
(F\star_{T,L_M} G)(s)=\sup_{\max(u,v)=s} T(F(u), G(v)),
\end{eqnarray*}
\begin{eqnarray*} 
\T( F, G)(s)= T(F(s),G(s)).
\end{eqnarray*}
\end{proposition}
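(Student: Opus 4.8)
The proposition asserts that each of the four operations is $k$-Lipschitz with respect to the modified L\'evy metric, i.e. that
\[
d_\LL(F\odot G,\,F'\odot G')\le k\bigl(d_\LL(F,F')+d_\LL(G,G')\bigr)
\]
for $\odot\in\{\star_T,\star_{T,L_M},\star_{T^*},\T\}$ and all $F,F',G,G'\in\Delta^+$. The whole argument runs on Lemma~\ref{Tri0}, which turns horizontal (L\'evy-type) shifts of the arguments into a vertical bound on $T$ and $T^*$, combined with the description of $d_\LL(A,B)$ as the infimum of those $h>0$ for which $[0,h^{-1}[\subset A^h_{A,B}\cap A^h_{B,A}$.

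The common set-up is as follows. I fix $h_1>d_\LL(F,F')$ and $h_2>d_\LL(G,G')$ and put $h:=k(h_1+h_2)$. Since $T(x,1)=x$ forces $k\ge1$, we have $h\ge h_1+h_2\ge\max(h_1,h_2)$, hence $h^{-1}\le\min(h_1^{-1},h_2^{-1})$. This is the decisive bookkeeping remark: whenever $s\in[0,h^{-1}[$ and $u,v\ge0$ satisfy $u+v=s$ or $\max(u,v)=s$, one has $u,v\le s<h^{-1}\le\min(h_1^{-1},h_2^{-1})$, so the defining L\'evy inequalities $F(u)\le F'(u+h_1)+h_1$ and $G(v)\le G'(v+h_2)+h_2$ are all at our disposal. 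The goal in each case is to prove $(F\odot G)(s)\le(F'\odot G')(s+h)+h$ for every $s\in[0,h^{-1}[$; this yields one of the two inclusions $[0,h^{-1}[\subset A^h_{F'\odot G',\,F\odot G}$, interchanging the roles of the primed and unprimed functions gives the companion inclusion, and therefore $d_\LL(F\odot G,F'\odot G')\le h$. Letting $h_1\downarrow d_\LL(F,F')$ and $h_2\downarrow d_\LL(G,G')$ then gives the assertion.

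I treat the supremum-type operations $\star_{T,L}$ with $L\in\{L_+,L_M\}$ together, using the translation estimate $L(u+h_1,v+h_2)\le L(u,v)+h_1+h_2$, valid for both $L_+$ and $L_M$. Fix $s\in[0,h^{-1}[$ and a decomposition $L(u,v)=s$ with $u,v\ge0$; by the set-up the L\'evy inequalities apply, so Lemma~\ref{Tri0} gives $T(F(u),G(v))\le T(F'(u+h_1),G'(v+h_2))+k(h_1+h_2)$. Since $L(u+h_1,v+h_2)\le s+h_1+h_2\le s+h$, the pair $(u+h_1,v+h_2)$ is an admissible decomposition for $F'\star_{T,L}G'$ at a level $\le s+h$, so monotonicity of $F'\star_{T,L}G'$ yields $T(F'(u+h_1),G'(v+h_2))\le(F'\star_{T,L}G')(s+h)$. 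Taking the supremum over $L(u,v)=s$ gives $(F\star_{T,L}G)(s)\le(F'\star_{T,L}G')(s+h)+h$, as wanted. The pointwise operation $\T$ is the same computation carried out at the single point $s$ (indeed $\star_{T,L_M}$ and $\T$ coincide on $\Delta^+$, the supremum in $\star_{T,L_M}$ being attained at $u=v=s$): Lemma~\ref{Tri0} gives $T(F(s),G(s))\le T(F'(s+h_1),G'(s+h_2))+k(h_1+h_2)\le T(F'(s+h),G'(s+h))+h$.

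The genuinely delicate operation, which I expect to be the main obstacle, is the infimum-type $\star_{T^*}$, since an inequality cannot be pushed naively through an infimum: I must match a near-minimizing decomposition of the right-hand side to an admissible decomposition of the left-hand side. I first prove the shifted estimate $(F\star_{T^*}G)(s)\le(F'\star_{T^*}G')(s+h_1+h_2)+k(h_1+h_2)$ and then conclude by monotonicity, using $s+h_1+h_2\le s+h$ and $k(h_1+h_2)=h$. For the shifted estimate, pick $u',v'\ge0$ with $u'+v'=s+h_1+h_2$ almost realizing the infimum defining $(F'\star_{T^*}G')(s+h_1+h_2)$, and set $u=u'-h_1$, $v=v'-h_2$, so that $u+v=s$. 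In the principal case $u,v\ge0$, the L\'evy inequalities and Lemma~\ref{Tri0} (applied to $T^*$) give $T^*(F(u),G(v))\le T^*(F'(u'),G'(v'))+k(h_1+h_2)$, and $(F\star_{T^*}G)(s)\le T^*(F(u),G(v))$ since $u+v=s$. The boundary subcase $u'<h_1$ (and symmetrically $v'<h_2$) must be handled by hand: there I use instead the decomposition $u=0$, $v=s$ together with the $t$-conorm identities $T^*(0,y)=y$ and $T^*\ge\max$, bounding $(F\star_{T^*}G)(s)\le T^*(0,G(s))=G(s)\le G'(s+h_2)+h_2\le G'(v')+h_2\le T^*(F'(u'),G'(v'))+h$ (here $v'=s+h_1+h_2-u'>s+h_2$). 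Combining the subcases yields the shifted estimate, hence the $k$-Lipschitz bound for $\star_{T^*}$, and completes the four cases.
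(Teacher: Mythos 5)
Your proof is correct, and on the case the paper actually writes out ($\star_T$) it follows the same mechanism: turn $h_1>d_\LL(F,F')$, $h_2>d_\LL(G,G')$ into the pointwise L\'evy inequalities, feed these into Lemma~\ref{Tri0}, deduce the shifted estimate $(F\odot G)(s)\le (F'\odot G')(s+k(h_1+h_2))+k(h_1+h_2)$ on the relevant interval, conclude $d_\LL(F\odot G,F'\odot G')\le k(h_1+h_2)$ by symmetry, and optimize over $h_1,h_2$ (you fix $h_1,h_2$ strictly above the distances and pass to the limit; the paper quantifies over all admissible $h_1,h_2$ and takes an infimum --- these are equivalent). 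The substantive difference is coverage: the paper proves only the $\star_T$ case and dismisses the other three with ``the technique is similar'', whereas you treat all four, and in doing so you isolate the one operation for which the similarity claim is \emph{not} a mechanical repetition, namely the infimum-type $\star_{T^*}$. There, as you correctly stress, the estimate cannot be pushed through an $\inf$ the way it is pushed through a $\sup$: one must select a near-minimizing decomposition $u'+v'=s+h_1+h_2$ on the primed side, translate it back by $(h_1,h_2)$, and handle the boundary case $u'<h_1$ (or symmetrically $v'<h_2$, the two being mutually exclusive since $s\ge 0$) where the translated pair leaves $[0,+\infty[$; your treatment of that case via $T^*(0,y)=y$, $T^*\ge\max$ and the bound $v'>s+h_2$ is sound, and it is precisely the step the paper never substantiates. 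Your observation that $\star_{T,L_M}$ and $\T$ coincide on $\Delta^+$ (the supremum over $\max(u,v)=s$ being attained at $u=v=s$ by monotonicity) is also a nice simplification that collapses two cases into one. In short: same method where the proofs overlap, but your version is complete where the paper's is elliptic, and the extra work is concentrated exactly where it is genuinely needed.
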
 
\begin{proof} We give the prove for $\star_T$, the technique is similar for the other triangle functions. Let $F,F', G, G'\in \Delta^+$. Let $h_1, h_2 > 0$ be such that
\begin{equation}\label{eq:4}
[0,h_1^{-1}[\subset A_{F,F'}^{h_1}\cap A_{F',F}^{h_1} \text{ and } [0,h_2^{-1}[\subset A_{G,G'}^{h_2}\cap A_{G',G}^{h_2},
\end{equation}
meanning that for all $t\in ]0,h_1^{-1}[$ and all $t'\in ]0,h_2^{-1}[$ we have:
\begin{eqnarray*}
0\leq F(t) \leq  F'(t+h_1) + h_1 \\
%\leq  F'(t+\max(h_1,h_2)) + \max(h_1,h_2)\\
0\leq F'(t) \leq  F(t+h_1) + h_1 \\
%\leq F(t+\max(h_1,h_2))+ \max(h_1,h_2)\\
0\leq G(t') \leq  G'(t'+h_2) + h_2\\
%\leq G'(t+\max(h_1,h_2))+\max(h_1,h_2)\\
0\leq G'(t') \leq  G(t'+h_2) + h_2.
%\leq G(t'+\max(h_1,h_2))+\max(h_1,h_2).
\end{eqnarray*}
%Let $0<c\leq \max(h_1,h_2)$ such that for all $t\in (0,c^{-1})$ 
Thus, combining the first and the third (resp. the second and the fourth) inequalities, and using  Lemma \ref{Tri0}, we have that for every $u,v\in ]0, \max(h_1,h_2)^{-1}[$($\subset ]0,\min(h_1^{-1},h_2^{-1})[$) 
\begin{eqnarray*}
 0\leq T(F(u),G(v)) &\leq& T( F'(u+h_1) ,G'(v+h_2)) +k(h_1+h_2)\\
 0\leq T(F'(u),G'(v))&\leq& T( F(u+h_1) ,G(v+h_2)) +k(h_1+h_2)
\end{eqnarray*}
Let $s\in ]0, \max(h_1,h_2)^{-1}[$, taking the supremum over $0\leq u, v$ such that $u+v=s$ in the above inequalities with the fact that $k\geq 1$, we get
\begin{eqnarray*}
0\leq (F\star_{T} G)(s) &\leq&  (F'\star_{T} G')  (s+(h_1+h_2)) + k(h_1+h_2)\\
                                &\leq& (F'\star_{T} G')  (s+k(h_1 +h_2)) + k(h_1+h_2)\\
0\leq (F'\star_{T} G')(s) &\leq&  (F\star_{T} G)  (s+(h_1+h_2)) + k(h_1+h_2) \\
                                 &\leq&  (F\star_{T} G)  (s+k(h_1+h_2)) + k(h_1+h_2)
\end{eqnarray*}
This shows that for all $h_1, h_2 > 0$ satisfying \eqref{eq:4}, we have
\[ d_\LL( F\star_{T} G,  F'\star_{T} G')\leq k(h_1+h_2).\]
Thus, taking the infinimum over $h_1$ and $h_2$, we get
$$d_\LL( F\star_{T} G,  F'\star_{T} G')\leq k (d_\LL( F,  F')+d_\LL( G,  G')).$$
%Similarly, using $(E1)$ and $(E2)$, we get
%$$d_\LL( F\star_{T,L_M} G,  F'\star_{T,L_M} G')\leq k (d_\LL( F,  F')+d_\LL( G,  G')).$$
%$$d_\LL( \T(F,G),  \T(F',G'))\leq k (d_\LL( F,  F')+d_\LL( G,  G')).$$
\end{proof}

%\begin{proof} $(1)$ Suppose that $(G, D, \star)$ is a probabilistic complete metric space. Let $(x_n)$ be a Cauchy sequence in $(G,\sigma_D)$, then from the inequality at the left in Theorem \ref{Metrizable} we get that $(x_n)$ is Cauchy sequence in $(G, D, \star)$ which is complete. Thus there exists $x\in G$ such that $d_\LL(D(x_n,x),\mathcal{H}_0)\to 0$. Now, using the inequality at the right in Theorem \ref{Metrizable}, we see that $(x_n)$ converges to $x$ for the metric $\sigma_D$. Hence $(G,\sigma_D)$ is a complete metric space. For the converse, suppose that $(G,\sigma_D)$ is a complete metric space. Let $(x_n)$ be a Cauchy sequence in $(G, D, \star)$. Using the inequality at the right of  Theorem \ref{Metrizable}, we get that $(x_n)$ is also a Cauchy sequence in $(G,\sigma_D)$ and so converges to some $x\in G$ for $\sigma_D$ since $(G,\sigma_D)$ is complete. The  inequality at the left of Theorem \ref{Metrizable} implies that $(x_n)$ also converges to $x$  in $(G, D, \star)$. Thus, $(G, D, \star)$ is a probabilistic complete metric space.

%$(2)$  Suppose that $(G,\sigma_D)$ is compact, then we use the part $(i)$ of Theorem \ref{Metrizable} to see that $(G, D, \star)$ is compact as probabilistic metric space. The converse follows from the part $(ii)$ of Theorem \ref{Metrizable}. 
%\end{proof} 
%%%%%%%%%%%%%
%%%%%%%%%%%%%
\section{Probabilistic Arzela-Ascoli type theorem}\label{S4}
This section is divided on two subsections. In subsection \ref{S41} we give some general definitions of probabilistic function spaces and in subsection \ref{S42}, we give the main result of this section, a probabilistic Arzela-Ascoli type theorem.
\subsection{The space of continuous and functions} \label{S41}
We are going to define continuity of functions defined from a probabilistic metric space $(G, D, \star)$ to a (deterministic) metric space  $(F,d_F)$.
 
\begin{definition}  Let $(G, D, \star)$ be a probabilistic metric space, $(F,d_F)$ be a metric space and let $f$ be a function $f : (G, D, \star) \longrightarrow (F,d_F)$. We say that $f$ is (probabilistic) continuous at $z\in G$ if $d_F(f(z_n)\, f(z))\to 0$  whenever $D(z_n,z)\,{\xrightarrow {\textnormal{w}}}\,\mathcal{H}_0$ (equivalently $d_\LL(D(z_n,z),\mathcal{H}_0)\to 0$). We say that $f$ is continuous if $f$ is continuous at each point $z\in G$.
\end{definition}
 By $C_\star(G,F)$ we denote the space of all (probabilistic) continuous  functions $f : (G, D, \star) \longrightarrow (F,d_F)$. By $C(G,F)$ we denote the space of all (deterministic) continuous functions $f : (G, \sigma_D) \longrightarrow (F,d_F)$. We both equip the spaces $C_\star(G,F)$ and $C(G,F)$  with the uniform metric 
\begin{eqnarray*}
d_{\infty}(f,g):=\sup_{x\in G} d_F(f(x), g(x))
\end{eqnarray*}
As in the standard case, the completeness of $C_\star(G,F)$ only relies on the completeness of the arrival space.
\begin{proposition} \label{complete} Let $(G, D, \star)$ be a probabilistic metric space (here $\star$ is not assumed to be continuous) and $(F,d_F)$ be a complete metric space. Then, the space $(C_\star(G,F), d_{\infty})$ is a complete metric space.
\end{proposition}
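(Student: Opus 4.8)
The plan is to follow the classical argument that a uniform limit of continuous functions is continuous, adapted to the probabilistic notion of continuity encoded by weak convergence in the domain. First I would confirm that $d_{\infty}$ is a genuine metric on $C_\star(G,F)$, its symmetry, triangle inequality and separation being inherited pointwise from $d_F$, and then fix an arbitrary Cauchy sequence $(f_n)$ in $(C_\star(G,F), d_{\infty})$.

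Next I would construct the candidate limit pointwise. For each fixed $x\in G$ the inequality $d_F(f_n(x), f_m(x))\leq d_{\infty}(f_n, f_m)$ shows that $(f_n(x))_n$ is Cauchy in $(F,d_F)$; completeness of $F$ yields a limit which I call $f(x)$, thereby defining a map $f:G\to F$. To upgrade from pointwise to uniform convergence I fix $\varepsilon>0$, choose $N$ with $d_{\infty}(f_n,f_m)<\varepsilon$ for $n,m\geq N$, and let $m\to+\infty$ in $d_F(f_n(x),f_m(x))<\varepsilon$ with $x$ held fixed; continuity of the metric $d_F$ then gives $d_F(f_n(x),f(x))\leq\varepsilon$ for every $x\in G$ and every $n\geq N$, that is $d_{\infty}(f_n,f)\to 0$.

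The remaining point, and the only one where the probabilistic structure enters, is to verify that $f\in C_\star(G,F)$. Fixing $z\in G$ and a sequence $(z_k)$ with $D(z_k,z)\,{\xrightarrow {\textnormal{w}}}\,\mathcal{H}_0$ (equivalently, by Lemma \ref{DS2}, with $d_\LL(D(z_k,z),\mathcal{H}_0)\to 0$), I would split
\[
d_F(f(z_k),f(z))\leq d_F(f(z_k),f_n(z_k))+d_F(f_n(z_k),f_n(z))+d_F(f_n(z),f(z))
\]
and bound the two outer terms by $d_{\infty}(f_n,f)$ uniformly in $k$, choosing $n$ large via the uniform convergence just established. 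With $n$ now frozen, the middle term tends to $0$ as $k\to+\infty$ precisely because $f_n$ is probabilistic continuous at $z$. A standard $\varepsilon/3$ bookkeeping then yields $d_F(f(z_k),f(z))\to 0$, so $f$ is continuous and is the $d_{\infty}$-limit of $(f_n)$, proving completeness.

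I do not expect a serious obstacle here: since continuity in this setting is purely sequential with respect to $d_\LL$-convergence in the domain, the only care needed is to route the domain convergence through Lemma \ref{DS2} and to keep the quantifier over $k$ uniform when estimating the outer terms. Notably, the hypothesis that $\star$ be continuous is nowhere used, which is consistent with the statement.
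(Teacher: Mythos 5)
Your proposal is correct and follows essentially the same route as the paper: pointwise limit from completeness of $(F,d_F)$, upgrade to uniform convergence from the Cauchy property, and an $\varepsilon/3$ triangle-inequality argument using the probabilistic continuity of a fixed $f_n$ to show the limit lies in $C_\star(G,F)$. The only (cosmetic) difference is that you invoke the sequential form of continuity for the middle term directly, whereas the paper phrases it through a modulus $\eta(\varepsilon)$; both are fine, and you correctly note that continuity of $\star$ plays no role.
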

\begin{proof} Let $(f_n)$ be a Cauchy sequence in $(C_\star(G,F), d_{\infty})$. In particular, for each $x\in G$, $(f_n(x))$ is Cauchy in $(F, d_F)$ which is complete. Thus, there exists a function $f : G \longrightarrow F$ such that the sequence $(f_n)$ pointwise converges to $f$ on $G$. It is easy to see that in fact $(f_n)$ uniformly converges to $f$, since it is Cauchy sequence in $(C_\star(G,F), d_{\infty})$. We need to prove that $f$ is a continuous function from  $(G, D, \star)$ into $(F,d_F)$. Let $x\in G$ and $(x_k)$ be a sequence such that $d_\LL(D(x_k,x), \mathcal{H}_0)\longrightarrow 0$, when $k\longrightarrow +\infty$. 
For all $\varepsilon >0$, there exists $N_\varepsilon\in \N$ such that 
\begin{eqnarray}\label{eqq1}
n \geq N_\varepsilon \Longrightarrow d_{\infty} (f_n, f):=\sup_{x\in G} d_F(f_n(x),f(x))\leq \varepsilon
\end{eqnarray}
Using the continuity of $f_{N_\varepsilon}$, we have that there exists $\eta(\varepsilon)>0$ such that
\begin{eqnarray}\label{eqq2}
d_\LL(D(x_k,x),\mathcal{H}_0) \leq \eta(\varepsilon) \Longrightarrow d_F(f_{N_\varepsilon}(x_k),f_{N_\varepsilon}(x)) \leq \varepsilon
\end{eqnarray}
Using (\ref{eqq1}) and (\ref{eqq2}), we have that  
\begin{eqnarray*}
d_F(f(x_k),f(x))&\leq& d_F(f(x_k),f_{N_\varepsilon}(x_k))+d_F(f_{N_\varepsilon}(x_k),f_{N_\varepsilon}(x))+ d_F(f_{N_\varepsilon}(x),f(x))\\
                &\leq& 3\varepsilon
\end{eqnarray*}
This shows that $f$ is continuous on $G$. Finally, we proved that every Cauchy sequence $(f_n)$ uniformly converges to a continuous function $f$. In other words, the space $(C_\star(G,F), d_{\infty})$ is complete.
\end{proof}
Since, for all $x$, $y\in G$,
\[
d_{\mathcal L}(D(x,y),\mathcal H_0)\leq \sigma_D(x,y),
\]
We have in general that $C_\star(G,F)\subset C(G,F)$. Assuming the continuity of the triangle function $\star$, we obtain the equality.
\begin{proposition} \label{compar} Let $(G, D, \star)$ be a probabilistic metric space and $(F,d)$ be a metric space. Suppose that $\star$ is continuous. Then, we have $C_\star(G,F)=C(G,F)$. In the case where $(F,d_F)=(\Delta^+,d_{\mathcal{L}})$, we also have that $Lip^1_\star(G,\Delta^+)\subset C_\star(G,\Delta^+)$.
\end{proposition}
\begin{proof} Thanks to Theorem~\ref{Metrizable}, we have, for all $x$, $y\in G$,
	\[
	\sigma_D(x,y)\leq \omega_\star\left(d_{\mathcal L}(D(x,y)),\mathcal H_0\right),
	\]
	providing the equality between $C_\star(G,F)$ and $C(G,F)$. For the second part of the statement, we use Lemma~\ref{equi} to ensure that, for any $f\in Lip^1_\star(G,\Delta^+)$ and for any $x$, $y\in G$,
	\[
	d_{\mathcal L}(f(x),f(y)) \leq \sup_{g\in Lip_\star^1(G,\Delta^+)}\left[d_{\mathcal L}(g(x),g(y))\right]\leq\omega_\star\left(d_{\mathcal L}(D(x,y)),\mathcal H_0\right),
	\]
	which gives the conclusion.
\end{proof}
\begin{remark} We do not know if $Lip^1_\star(G,\Delta^+)\subset C_\star(G,\Delta^+)$ when $\star$ is not continuous.
\end{remark}
\subsection{Arzela-Ascoli type theorem for the space $ Lip^1_\star(K,\Delta^+)$} \label{S42}
The following proposition gives a canonical way to build probabilistic $1$-Lipschitz maps from $(G,D,\star)$ into $\Delta^+$.
\begin{definition} A triangle function $\star$ is  said to be sup-continuous (see for instance \cite{C3}) if for all nonempty set $I$ and all familly $(F_i)_{i\in I}$ of distributions in $\Delta^+$ and all $L\in\Delta^+$, we have $$\sup_{i\in I} (F_i\star L)=\sup_{i\in I}(F_i)\star L.$$
\end{definition}
\begin{proposition} Let $(G, D, \star)$ be a probabilistic metric space such that $\star$ is sup-continuous. Let $f : (G,D,\star) \longrightarrow \Delta^+$ be any map and $A$ be any no-empty subset of $G$. Then, the map $\tilde{f}_A(x):=\sup_{y\in A} [ f(y) \star D(x,y)]$, for all $x\in G$ is a probabilistic $1$-Lipschitz map and we have $\tilde{f}_A(x)\geq f(x)$, for all $x\in A$.
\end{proposition}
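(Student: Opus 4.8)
The plan is to verify directly the two assertions from the definition of a probabilistic $1$-Lipschitz map, that is, the inequality $D(x,z)\star \tilde{f}_A(z)\leq \tilde{f}_A(x)$ for all $x,z\in G$ together with the pointwise bound on $A$. As a preliminary remark, I would observe that $\tilde{f}_A$ is genuinely $\Delta^+$-valued: the sup-continuity hypothesis presupposes that suprema of families in $\Delta^+$ remain in $\Delta^+$, so for each $x\in G$ the quantity $\tilde{f}_A(x)=\sup_{y\in A}[f(y)\star D(x,y)]$ is a well-defined element of $\Delta^+$.

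For the lower bound, I would simply test the defining supremum at the index $y=x$ for a point $x\in A$. Since $D(x,x)=\mathcal{H}_0$ and $\mathcal{H}_0$ is the neutral element of $\star$, this gives $\tilde{f}_A(x)\geq f(x)\star D(x,x)=f(x)\star\mathcal{H}_0=f(x)$, which is the claimed inequality.

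The core of the argument is the $1$-Lipschitz inequality. Fixing $x,z\in G$, I would first use commutativity together with the sup-continuity of $\star$ to pull $D(x,z)$ inside the supremum, then use associativity to regroup the three resulting factors:
\[
D(x,z)\star\tilde{f}_A(z)=D(x,z)\star\sup_{y\in A}\bigl[f(y)\star D(z,y)\bigr]=\sup_{y\in A}\bigl[f(y)\star\bigl(D(x,z)\star D(z,y)\bigr)\bigr].
\]
For each fixed $y$, the triangle inequality of the probabilistic metric (axiom (iii), applied to the points $x,z,y$) yields $D(x,z)\star D(z,y)\leq D(x,y)$, and the monotonicity of $\star$ in each place (axiom (v)) upgrades this to $f(y)\star\bigl(D(x,z)\star D(z,y)\bigr)\leq f(y)\star D(x,y)$. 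Taking the supremum over $y\in A$ then gives $D(x,z)\star\tilde{f}_A(z)\leq \sup_{y\in A}[f(y)\star D(x,y)]=\tilde{f}_A(x)$, which is precisely the defining condition of $Lip^1_\star(G,\Delta^+)$.

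The only genuinely delicate point is the interchange of $\star$ with the supremum, and this is exactly the role of the sup-continuity hypothesis; everything else reduces to the algebraic axioms of the triangle function (commutativity, associativity, monotonicity, and the neutral element $\mathcal{H}_0$) together with the metric triangle inequality $D(x,z)\star D(z,y)\leq D(x,y)$. I would emphasize that no continuity or Lipschitz assumption on $\star$ is needed for this proposition.
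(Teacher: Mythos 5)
Your proof is correct and follows essentially the same route as the paper's: both arguments combine the triangle inequality $D(x,z)\star D(z,y)\leq D(x,y)$, monotonicity and associativity/commutativity of $\star$, and sup-continuity to interchange $\star$ with the supremum, the only (cosmetic) difference being that you distribute $D(x,z)$ into the supremum first and then bound termwise, whereas the paper bounds each term $f(z)\star D(y,z)$ from below first and applies sup-continuity at the very end. Your added remarks---that the supremum defining $\tilde{f}_A$ stays in $\Delta^+$ and that no continuity of $\star$ beyond sup-continuity is used---are accurate and harmless.
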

\begin{proof} The proof is similar to the standard inf-convolution construction. The fact that $\tilde{f}_A(x)\geq f(x)$ for all $x\in A$ is immediate from the definition of $\tilde{f}_A$. Let us now prove that it is probabilistic $1$-Lipschiptz. Let $x$, $y\in G$. Then, for all $z\in A$, we have
	\begin{multline*}
		\tilde{f}_A(y) = \sup_{z\in A}\left[f(z)\star D(y,z)\right]\geq f(z)\star D(y,z)\\
		\geq f(z)\star \left(D(y,x)\star D(x,z)\right)=\left(f(z)\star D(x,z)\right)\star D(y,x).
	\end{multline*}
	We get the conclusion by taking the supremum with respect to $z\in A$ and using the  sup-continuity of $\star$.
	\end{proof}
%%%%%%%%%%%%%%%%%%%%%%
%%%%%%%%%%%%%%%%%%%%%%
 Let us now recall the following result from \cite{Ba}.
\begin{proposition} (\cite[Proposition 3.5]{Ba}) \label{util}
Let $(F_n), (L_n), (K_n) \subset (\Delta^+,\star)$. Suppose that 
\begin{itemize}
\item[(a)] the triangle function $\star$ is continuous,
\item[(b)] $F_n\,{\xrightarrow {\textnormal{w}}}\, F$, $L_n\,{\xrightarrow {\textnormal{w}}}\, L$ and $K_n\,{\xrightarrow {\textnormal{w}}}\, K$. 
\item[(c)] for all $n\in \N$, $F_n \star L_n \leq K_n$.
\end{itemize}
Then, $F\star L\leq K$.
\end{proposition}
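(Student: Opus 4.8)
The plan is to pass to the weak limit in the hypothesis (c), the only genuine subtlety being that weak convergence gives pointwise convergence merely at the continuity points of the limit. First, using assumption (a) together with part $(2)$ of Definition~\ref{contin}, the hypotheses $F_n \,{\xrightarrow {\textnormal{w}}}\, F$ and $L_n \,{\xrightarrow {\textnormal{w}}}\, L$ yield $F_n\star L_n \,{\xrightarrow {\textnormal{w}}}\, F\star L$. Writing $G_n:=F_n\star L_n$ and $G:=F\star L$, the statement reduces to the following order-theoretic fact: if $G_n, K_n\in\Delta^+$ satisfy $G_n\leq K_n$ for all $n$, and $G_n\,{\xrightarrow {\textnormal{w}}}\, G$, $K_n\,{\xrightarrow {\textnormal{w}}}\, K$, then $G\leq K$ on $\R$.

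To establish this reduced claim, I would first fix a point $t\in\R$ at which both $G$ and $K$ are continuous. By the definition of weak convergence we then have $G_n(t)\to G(t)$ and $K_n(t)\to K(t)$, so passing to the limit in the inequality $G_n(t)\leq K_n(t)$ gives $G(t)\leq K(t)$. Since $G$ and $K$ are nondecreasing, each has at most countably many discontinuity points; hence the set $C$ of their common continuity points is the complement of a countable set, and in particular $C$ is dense in $\R$.

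It then remains to upgrade the inequality from $C$ to all of $\R$. Fix an arbitrary $t\in\R$. Because $C$ is dense, I can pick a sequence $(t_m)\subset C$ with $t_m<t$ and $t_m\to t$ (for instance $t_m\in C\cap\,]t-\tfrac1m,t[$). For each $m$ we have $G(t_m)\leq K(t_m)$, and letting $m\to +\infty$, the left-continuity of $G$ and $K$ (which holds since they belong to $\Delta^+$) gives $G(t_m)\to G(t)$ and $K(t_m)\to K(t)$, whence $G(t)\leq K(t)$. As $t$ was arbitrary, we conclude $F\star L = G\leq K$, which is the desired inequality.

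The main obstacle is precisely this last step: weak convergence does not a priori control the values of the limits at their common discontinuity points, so a naive pointwise passage to the limit in (c) may fail exactly where the jumps occur. The remedy is to exploit the two structural features built into $\Delta^+$ that make the argument go through, namely the density of the continuity points of a monotone function and the left-continuity of its elements, approaching each point from the left so that the verified inequality propagates to every $t\in\R$.
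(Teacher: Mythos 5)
Your proof is correct. Note, however, that the paper itself offers no argument to compare against: Proposition \ref{util} is quoted verbatim from \cite[Proposition 3.5]{Ba} and used as a black box, so your write-up is effectively supplying the missing proof. Your two-step scheme is sound: first, continuity of $\star$ in the sense of Definition \ref{contin}(2) converts hypotheses (a)--(b) into $F_n\star L_n \,{\xrightarrow {\textnormal{w}}}\, F\star L$, reducing the statement to the order-theoretic fact that the partial order $\leq$ on $\Delta^+$ is preserved under weak limits; second, that fact is established by passing to the limit at the common continuity points of $G:=F\star L$ and $K$ (a co-countable, hence dense, set, since monotone functions have at most countably many discontinuities) and then upgrading to every $t\in\R$ by approaching $t$ strictly from the left and using the left-continuity built into $\Delta^+$. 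You correctly identified the only delicate point --- weak convergence says nothing at common jump points --- and the left-approach argument disposes of it. An equivalent packaging, closer in spirit to the tools the paper does develop, would be to invoke Lemma \ref{DS2} (weak convergence coincides with $d_\LL$-convergence) and phrase your reduced claim as the closedness of $\lbrace (G,K)\in\Delta^+\times\Delta^+ : G\leq K\rbrace$ in $(\Delta^+,d_\LL)^2$; the underlying computation is the same as yours.
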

%\begin{lemma} \label{Lip} Let $(G, D, \star)$ be a probabilistic metric space. Let $(f_n)$ be a sequence of $1$-Lipschitz maps. Suppose that there exists a function $f$ defined on $G$ such that $f_n(x) \,{\xrightarrow {\textnormal{w}}}\, f(x)$, when $n \longrightarrow +\infty$, for all $x\in G$. Then, $f$ is (probabilistic) $1$-Lipschitz on $G$.
%\end{lemma}
%\begin{proof} Since each $f_n$ is a $1$-Lipschitz map, we have for all $x, y \in L$ and for all $n\in \N$:
%\begin{eqnarray*}
%D(x,y)\star f_n(x) \leq f_n(y)
%\end{eqnarray*}
%Using Proposition \ref{util}, we get that for all $x, y \in L$
%\begin{eqnarray*}
%D(x,y)\star f(x) \leq f(y)
%\end{eqnarray*}
%In other words, $f$ is $1$-Lipschitz maps on $L$.
%\end{proof}
%%%%%%%%%%%%%%%%%%%%

%%%%%%%%%%%%%%%%%%%%%%
\begin{lemma} \label{unif} Let $(K, D, \star)$ be a probabilistic compact metric space and $(f_n)$ be a sequence of probabilistic $1$-Lipschitz maps. Suppose that there exists a function $f$ defined from $K$  into $\Delta^+$ such that, for all $x\in K$, $d_\LL(f_n(x),f(x))\longrightarrow 0$, as $n\to +\infty$. Then, $f$ is (probabilistic) $1$-Lipschitz on $K$ and $(f_n)$ converges uniformly to $f$, that is, $d_{\infty}(f_n,f)\longrightarrow 0$, as $n \to +\infty$.
\end{lemma}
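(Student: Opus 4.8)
The plan is to establish two things: that the pointwise limit $f$ is again probabilistic $1$-Lipschitz, and that the convergence $f_n \to f$ is in fact uniform. The first part is a direct application of Proposition \ref{util}. Fix $x$, $y \in K$. For each $n$, the $1$-Lipschitz property of $f_n$ gives $D(x,y) \star f_n(y) \leq f_n(x)$. Setting $F_n := D(x,y)$ (a constant sequence, hence weakly convergent to $D(x,y)$), $L_n := f_n(y) \xrightarrow{\textnormal{w}} f(y)$, and $K_n := f_n(x) \xrightarrow{\textnormal{w}} f(x)$, the three hypotheses of Proposition \ref{util} hold (continuity of $\star$ is assumed, weak convergence is the hypothesis on $(f_n)$, and the inequality $F_n \star L_n \leq K_n$ is exactly the $1$-Lipschitz condition). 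The conclusion $D(x,y) \star f(y) \leq f(x)$ then shows $f \in Lip^1_\star(K,\Delta^+)$.

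For the uniform convergence, the key structural input is Lemma \ref{equi}, which tells us that the whole family $Lip^1_\star(K,\Delta^+)$ is uniformly equicontinuous with modulus controlled by $\omega_\star$. Since $f$ and every $f_n$ lie in this family, I would run the standard Arzela-Ascoli-style equicontinuity argument. First I would exploit compactness of $(K,D,\star)$: by Corollary \ref{passage}, $(K,\sigma_D)$ is a compact metric space, so it is totally bounded. Given $\varepsilon > 0$, choose $\eta > 0$ with $\omega_\star(\eta) < \varepsilon/3$ (possible since $\omega_\star$ is continuous at $0$ with $\omega_\star(0)=0$), and cover $K$ by finitely many balls of $\sigma_D$-radius less than $\eta$, say centered at $x_1, \dots, x_m$. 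By Lemma \ref{equi} together with the inequality $d_\LL(D(x,x_i),\mathcal{H}_0) \leq \sigma_D(x,x_i)$ from Theorem \ref{Metrizable}, any $g \in Lip^1_\star(K,\Delta^+)$ satisfies $d_\LL(g(x), g(x_i)) \leq \omega_\star(\sigma_D(x,x_i)) < \varepsilon/3$ whenever $x$ lies in the $i$-th ball.

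With the finite net in hand, I would close the argument by a standard three-$\varepsilon$ estimate. Using pointwise convergence at the finitely many centers $x_1, \dots, x_m$, pick $N$ so large that $d_\LL(f_n(x_i), f(x_i)) < \varepsilon/3$ for all $n \geq N$ and all $i \in \{1,\dots,m\}$. Then for an arbitrary $x \in K$, choosing $x_i$ with $\sigma_D(x,x_i) < \eta$, the triangle inequality for $d_\LL$ yields
\[
d_\LL(f_n(x), f(x)) \leq d_\LL(f_n(x), f_n(x_i)) + d_\LL(f_n(x_i), f(x_i)) + d_\LL(f(x_i), f(x)) < \varepsilon
\]
for all $n \geq N$, where the outer two terms are controlled by equicontinuity (applied to $f_n$ and to $f$, both in $Lip^1_\star(K,\Delta^+)$) and the middle term by the choice of $N$. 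Taking the supremum over $x \in K$ gives $d_\infty(f_n, f) \leq \varepsilon$ for $n \geq N$, which is the desired uniform convergence. The main obstacle, if any, is purely bookkeeping: ensuring the equicontinuity modulus $\omega_\star$ is applied uniformly over the whole family rather than to a single map, but this is exactly what Lemma \ref{equi} provides, so the argument reduces to the classical compactness proof transported through the metrization $\sigma_D$.
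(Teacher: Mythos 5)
Your proof is correct and follows essentially the same route as the paper: the probabilistic $1$-Lipschitz property of $f$ via Proposition \ref{util}, then uniform convergence by combining the equicontinuity of $Lip^1_\star(K,\Delta^+)$ from Lemma \ref{equi} with compactness to extract a finite net and run a three-$\varepsilon$ estimate. The only cosmetic difference is that you cover $K$ by $\sigma_D$-balls via Corollary \ref{passage}, while the paper covers directly by strong neighborhoods $N_a(\eta(\varepsilon))$, which amounts to the same thing by Lemma \ref{Topo}.
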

\begin{proof} Since each $f_n$ is a $1$-Lipschitz map, we have for all $x, y \in L$ and for all $n\in \N$:
\begin{eqnarray*}
D(x,y)\star f_n(x) \leq f_n(y)
\end{eqnarray*}
Using Proposition \ref{util}, we get that for all $x, y \in L$
\begin{eqnarray*}
D(x,y)\star f(x) \leq f(y)
\end{eqnarray*}
In other words, $f$ is $1$-Lipschitz maps on $L$ (Note that up to now, we have not needed to use the compactness of $K$).

Now,  let $\varepsilon >0$ and, using Lemma \ref{equi}, let $\eta(\varepsilon)$ be the uniform modulus of equicontinuity for the set $Lip_\star^1(K,\Delta^+)$. Since $(K, D, \star)$ is compact, there exists a finite set $A$ such that 
$K=\cup_{a\in A} N_a(\eta(\varepsilon))$. 
Since $d_\LL(f_n(a),f(a))\longrightarrow 0$, as $n\to +\infty$ for all $a\in A$. Then, for each $a\in A$, there exists $P_a\in \N$ such that 
\begin{eqnarray*}
n\geq P_a \Longrightarrow d_\LL(f_n(a),f(a)) & \leq & \varepsilon
\end{eqnarray*}
Since $A$ is finite, we have that
\begin{eqnarray*}
n\geq \max_{a\in A} P_a \Longrightarrow \sup_{a\in A} d_\LL(f_n(a),f(a)) & \leq & \varepsilon
\end{eqnarray*}
Thus, for all $x\in K=\cup_{a\in A} N_a(\eta(\varepsilon))$, there exists $a\in A$ such that $x\in  N_a(\eta(\varepsilon))$ and so we have that for all $n\geq \max_{a\in A} P_a$ :
\begin{eqnarray*}
d_\LL(f_n(x),f(x)) &\leq& d_\LL(f_n(x),f_n(a)) + d_\LL(f_n(a),f(a)) + d_\LL(f(a),f(x))\\
                                                         &\leq& 3\varepsilon.
\end{eqnarray*} 
In other words, 
\begin{eqnarray*}
n\geq \max_{a\in A} P_a \Longrightarrow d_{\infty}(f_n,f):=\sup_{x\in K} d_\LL(f_n(x),f(x)) &\leq& 3\varepsilon
\end{eqnarray*}
\end{proof}
%%%%%%%%%%%%%%%%%%%%%%%
%
%
We give now our main result of this section. For the classical Arzela-Ascoli theorem we refer to the book of L.~Schwartz, Analyse I, {\it ''Théorie des ensembles et Topologie''}, page 346.
\begin{theorem} \label{AA} Let $(K, D, \star)$ be a probabilistic complete metric space such that $\star$ is continuous. Then, the following assertions are equivalent. 
\begin{enumerate}
\item $(K, D, \star)$ is compact.
\item The metric space $(Lip^1_\star(K,\Delta^+),d_{\infty})$ is compact (or equivalently, $Lip^1_\star(K,\Delta^+)$ is a compact subset of $(C_\star(K,\Delta^+), d_{\infty})=(C(K,\Delta^+), d_{\infty})$).
\end{enumerate}
\end{theorem}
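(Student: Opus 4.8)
The plan is to prove the two implications separately, with the bulk of the work lying in $(1)\Rightarrow(2)$. Assuming $(K,D,\star)$ compact, I would establish sequential compactness of $(Lip^1_\star(K,\Delta^+),d_{\infty})$, which suffices since this is a metric space. First I would record that $K$ is separable (via Corollary~\ref{passage}, a compact metric space being separable) and fix a countable dense subset $\{x_j\}_{j\in\N}$. Given any sequence $(f_n)\subset Lip^1_\star(K,\Delta^+)$, I would run the standard diagonal extraction: since $(\Delta^+,d_\LL)$ is compact by Lemma~\ref{DS1}, for each $j$ the sequence $(f_n(x_j))_n$ has a convergent subsequence, and a diagonal argument yields a single subsequence $(f_{n_k})$ converging at every $x_j$.

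The point where equicontinuity enters is the upgrade from the dense set to all of $K$: given $x\in K$ and $\varepsilon>0$, Lemma~\ref{equi} provides an $\eta>0$ so that $d_\LL(D(x,x_j),\mathcal{H}_0)<\eta$ forces $d_\LL(f_{n_k}(x),f_{n_k}(x_j))\leq\varepsilon$ uniformly in $k$; choosing such a dense $x_j$ and inserting it into the triangle inequality for $d_\LL$ shows $(f_{n_k}(x))_k$ is $d_\LL$-Cauchy, hence convergent in the complete space $\Delta^+$. This defines a pointwise limit $f$ on all of $K$. I would then invoke Lemma~\ref{unif} directly: since $(f_{n_k})$ converges pointwise to $f$ on the compact space $K$ and each $f_{n_k}$ is probabilistic $1$-Lipschitz, the limit $f$ is probabilistic $1$-Lipschitz and the convergence is uniform, i.e. $d_{\infty}(f_{n_k},f)\to 0$. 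Thus every sequence in $Lip^1_\star(K,\Delta^+)$ admits a $d_{\infty}$-convergent subsequence with limit again in $Lip^1_\star(K,\Delta^+)$, giving compactness; the identification with a compact subset of $(C_\star(K,\Delta^+),d_{\infty})=(C(K,\Delta^+),d_{\infty})$ is then read off from Proposition~\ref{compar}.

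For $(2)\Rightarrow(1)$, the key observation is that $\delta:(K,\sigma_D)\to (Lip^1_\star(K,\Delta^+),d_{\infty})$, $x\mapsto\delta_x$, is an isometry, since by symmetry of $D$ one has $d_{\infty}(\delta_x,\delta_y)=\sup_{z\in K}d_\LL(D(x,z),D(y,z))=\sigma_D(x,y)$. Because $(K,D,\star)$ is complete by hypothesis, $(K,\sigma_D)$ is a complete metric space by Corollary~\ref{passage}, so its isometric image $\mathcal{G}(K)=\delta(K)$ is complete and therefore closed in $Lip^1_\star(K,\Delta^+)$. A closed subset of the compact space $Lip^1_\star(K,\Delta^+)$ is compact, so $\mathcal{G}(K)$, and hence $(K,\sigma_D)$, is compact; by Corollary~\ref{passage} again, $(K,D,\star)$ is compact.

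The main obstacle is the $(1)\Rightarrow(2)$ direction: one must carefully combine the compactness of the target $(\Delta^+,d_\LL)$ (for the diagonal extraction) with the uniform equicontinuity of $Lip^1_\star(K,\Delta^+)$ from Lemma~\ref{equi} (first to pass from a dense set to all of $K$, then to promote pointwise to uniform convergence through Lemma~\ref{unif}). By contrast, $(2)\Rightarrow(1)$ is short once the isometric embedding $\delta$ is identified and completeness is used to secure closedness.
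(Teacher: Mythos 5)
Your proof is correct, and its skeleton is the same as the paper's: both directions hinge on the same ingredients, namely the uniform equicontinuity of $Lip^1_\star(K,\Delta^+)$ (Lemma~\ref{equi}), the compactness of $(\Delta^+,d_\LL)$ (Lemma~\ref{DS1}), the closedness/uniform-convergence statement of Lemma~\ref{unif}, and the transfer results of Theorem~\ref{Metrizable} and Corollary~\ref{passage}. The differences are in packaging. For $(1)\Rightarrow(2)$, the paper simply cites the classical Arzela--Ascoli theorem to get relative compactness of $Lip^1_\star(K,\Delta^+)$ in $(C(K,\Delta^+),d_\infty)$ and then uses Lemma~\ref{unif} for closedness; you instead inline the Arzela--Ascoli argument (separability of $K$ via Corollary~\ref{passage}, diagonal extraction using compactness of $\Delta^+$, upgrade from the dense set via equicontinuity), which makes the proof self-contained at the cost of length, and is equally valid. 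For $(2)\Rightarrow(1)$, the paper extracts a uniformly convergent subsequence of $(\delta_{x_n})$, reads off that $(x_{\varphi(n)})$ is $\sigma_D$-Cauchy (using Theorem~\ref{Metrizable} to pass from $d_\LL(D(\cdot,\cdot),\mathcal{H}_0)$ to $\sigma_D$), and concludes by completeness; your version is cleaner and slightly more structural: since $d_\infty(\delta_x,\delta_y)=\sigma_D(x,y)$ holds by the very definition of $\sigma_D$, the map $\delta$ is an isometric embedding, so completeness of $(K,\sigma_D)$ makes $\mathcal{G}(K)$ a closed, hence compact, subset of the compact space $Lip^1_\star(K,\Delta^+)$, and compactness of $(K,\sigma_D)$ follows without any sequence chasing. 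Both routes are sound; yours trades brevity in $(1)\Rightarrow(2)$ for a sharper observation in $(2)\Rightarrow(1)$.
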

\begin{proof} $ \bullet\ (1) \Longrightarrow (2)$ Suppose that $(K, D, \star)$ is compact, equivalently $(K,\sigma_D)$ is compact by Corollary \ref{passage}.  Using Lemma \ref{equi} and Theorem \ref{Metrizable},  the set $Lip_\star^1(G,\Delta^+)$ is uniformly  equicontinuous with respect to the metric $\sigma_D$. Moreover, $(\Delta^+,d_\LL)$ is compact, hence $Lip_\star^1(G,\Delta^+)$ is relatively compact in $(C(K,\Delta^+), d_{\infty})$ by Arzela-Ascoli theorem. On the other hand, by Lemma \ref{unif},  the set $Lip_\star^1(G,\Delta^+)$ is closed in $(C(G,\Delta^+),d_{\infty})$. Hence it is compact.
	
$\bullet\ (2)  \Longrightarrow (1)$ Suppose that $(Lip^1_\star(K,\Delta^+),d_{\infty})$ is compact. Let $(x_n)$ be a sequence of $K$. We need to prove that $(x_n)$ has a convergent subsequence. Consider the sequence $(\delta_{x_n})$ of $1$-Lipschitz maps, defined by $\delta_{x_n}: x\mapsto D(x_n,x)$ for each $n\in \N$. By assumption, there exists a subsequence $(\delta_{x_{\varphi(n)}})$ that converges uniformly to some $1$-Lipschitz map, in particular it is a Cauchy sequence. In other words, we have 
$$\lim_{p,q\longrightarrow +\infty} \sup_{x\in K} d_\LL(\delta_{x_{\varphi(p)}}(x),\delta_{x_{\varphi(q)}}(x))=0.$$
In particular we have 
$$\lim_{p,q\longrightarrow +\infty} d_\LL(\delta_{x_{\varphi(p)}}(x_{\varphi(q)}),\mathcal{H}_0)=0,$$
or equivalently,
$$\lim_{p,q\longrightarrow +\infty} d_\LL(D(x_{\varphi(p)},x_{\varphi(q)}),\mathcal{H}_0)=0.$$
This shows that the sequence $(x_{\varphi(n)})$ is Cauchy in $(K,\sigma_D)$ (see Theorem \ref{Metrizable}). Thus, the sequence $(x_{\varphi(n)})$ converges to some point $x\in K$ for the metric $\sigma_D$, since $(K,\sigma_D)$  is complete. Hence, $(K,\sigma_D)$ is compact, equivalently, $(K, D, \star)$ is compact. This ends the proof.
\end{proof}
%\section{Conclusion}

%%%%%%%%%%%%%%%%%%%%%%%%%%%
%%%%%%%%%%%%%%%%%%%%%%%%%%%%%

\bibliographystyle{amsplain}

\end{document}